\colorlet{linkequation}{blue}
\definecolor{dgreen}{rgb}{0,0.5,0}
\definecolor{violet}{rgb}{0.5,0,0.5}
\definecolor{dred}{rgb}{0.7,0,0}
\definecolor{ddred}{rgb}{0.5,0,0}
\definecolor{dblue}{rgb}{0,0,0.5}
\definecolor{ddblue}{rgb}{0,0,0.3}
\newtheorem{theorem}{Theorem}[section]
\newtheorem{lemma}[theorem]{Lemma}
\newtheorem{corollary}[theorem]{Corollary}
\newtheorem{definition}[theorem]{Definition}
\newtheorem{proposition}[theorem]{Proposition}
\newtheorem{remark}[theorem]{Remark}
\numberwithin{equation}{section}
\DeclareMathOperator*{\ddiv}{div}
\newcommand{\capp}{\text{cap}_p}
\newcommand{\integral}[3]{\int_{#1} #2 \ #3}
\newcommand{\mint}[3]{\fint_{#1} #2 \ #3}
\newcommand{\norm}[1]{\left| #1\right|}
\newcommand{\Norm}[1]{\left|\hspace{-0.2mm}\left| #1 \right|\hspace{-0.2mm}\right|}
\newcommand{\gh}[1]{\left( #1\right)}
\newcommand{\mgh}[1]{\left\{ #1\right\}}
\newcommand{\bgh}[1]{\left[ #1\right]}
\newcommand{\vgh}[1]{\left< #1\right>}
\newcommand{\OO}{\Omega}
\newcommand{\ma}{\mathbf{a}}
\newcommand{\lpwwz}{L^p (0,T; W_0^{1,p}(\OO))}
\newcommand{\abs}[1]{\left| #1\right|}
\newcommand{\Abs}[1]{\left\| #1\right\|}
\newcommand{\iq}[1]{Q_{#1}^{\lambda}}
\newcommand{\pc}[1]{\lfloor {#1} \rfloor}
\newcommand{\la}{\lambda}
\newcommand{\La}{\Lambda}
\newcommand{\ka}{\kappa}
\newcommand{\M}{\mathcal{M}}
\newcommand{\bb}{\mathbb{R}}
\newcommand{\LL}{\mathcal{L}}
\newcommand{\loc}{loc}
   \def\MR#1{}
\def\XXint#1#2#3{{\setbox0=\hbox{$#1{#2#3}{\int}$}
    \vcenter{\hbox{$#2#3$}}\kern-.5\wd0}}
\begin{document}

\title[Singular parabolic $p$-Laplace type equations with measure data]{Marcinkiewicz regularity for singular parabolic $p$-Laplace type equations with measure data}

\author{Jung-Tae Park}
\address{J.-T. Park: School of Liberal Arts, Korea University of Technology and Education, Cheonan 31253, Republic of Korea}
\email{jungtae.park@koreatech.ac.kr}


\subjclass[2020]{Primary 35K92; Secondary 35R06, 35B65}

\date{July 31, 2021 and, in revised form, April 9, 2022}

\keywords{singular parabolic equation; measure data; Marcinkiewicz space}

\begin{abstract}
We consider quasilinear parabolic equations with measurable coefficients when the right-hand side is a signed Radon measure with finite total mass, having $p$-Laplace type:
$$u_t - \textrm{div} \, \mathbf{a}(Du,x,t) = \mu \quad \textrm{in} \ \Omega \times (0,T) \subset \mathbb{R}^n \times \mathbb{R}.$$
In the singular range $\frac{2n}{n+1} <p \le 2-\frac{1}{n+1}$, we establish regularity estimates for the spatial gradient of solutions in the Marcinkiewicz spaces, under a suitable density condition of the right-hand side measure.
\end{abstract}

\maketitle



\section{Introduction}\label{Introduction}

We study some integrability results of the spatial gradient of solutions to nonlinear parabolic problems with measure data, having the $p$-Laplace type:
\begin{equation}\label{E:PME}
u_t - \ddiv \mathbf{a}(Du,x,t) = \mu \quad \text{in}  \  \OO_T,
\end{equation}
where $\mu$ is a signed Radon measure with finite total mass, that is, $|\mu|(\Omega_{T}) < \infty$. As usual the unknown is $u: \Omega_{T} \to \mathbb{R}$, $u=u(x,t)$, where $\Omega_{T} = \Omega \times (0, T)$ is a cylindrical domain with a bounded, open subset $\Omega \subset \mathbb{R}^n$, $n\geq 2$, and $T>0$. We write $Du:=D_x u$.
The vector field $\mathbf{a}=\mathbf{a}(\xi,x,t): \bb^n \times \bb^n \times \bb \rightarrow \bb^n$ is assumed to be measurable in $(x,t)$, continuous in $\xi$, and subject to the structure conditions
\begin{equation}\label{structure}
\begin{cases}
  \left|\mathbf{a}(\xi,x,t)\right| \leq \Lambda_1 |\xi|^{p-1},\\
  \left< \mathbf{a}(\xi_1,x,t)-\mathbf{a}(\xi_2,x,t), \xi_1-\xi_2 \right> \ge \La_0 \left( \norm{\xi_1}^2 + \norm{\xi_2}^2  \right)^{\frac{p-2}{2}} \norm{\xi_1-\xi_2}^2
\end{cases}
\end{equation}
for almost every $(x, t) \in \mathbb{R}^n \times \bb$, for  any $\xi_1, \xi_2, \xi \in \mathbb{R}^n$ and for some constants $\Lambda_1 \ge \Lambda_0 >0$. A typical prototype of \eqref{E:PME} is the parabolic $p$-Laplace equation
\begin{equation}\label{model eq}
u_t - \ddiv\gh{|Du|^{p-2}Du}  = \mu.
\end{equation}
According to the range of $p$, the equation \eqref{model eq} can be divided into three types. If $p=2$, then \eqref{model eq} just becomes the classical heat equation. At points where $|Du|=0$, the diffusivity coefficient $|Du|^{p-2}$ vanishes if $p>2$ and it blows up if $1<p<2$. Thus \eqref{model eq} for $p>2$ is called {\em degenerate parabolic $p$-Laplace equation}. When $1<p<2$, \eqref{model eq} is referred to as {\em singular parabolic $p$-Laplace equation}.

In this paper, we shall deal with the singular range
\begin{equation}\label{range of p}
  \frac{2n}{n+1} < p \le 2-\frac{1}{n+1},
\end{equation}
while the other singular range $2-\frac{1}{n+1} < p <2$ has been considered in \cite{Bar17}, and the degenerate range $p\ge2$ has been treated in \cite{Bar14,BD18}. The lower bound on $p$ in \eqref{range of p} is natural and sharp, since it is related to the existence of the (Barenblatt) fundamental solution (see for instance \cite[Chapter 11.4]{Vaz06}). Indeed, the fundamental solution $\Gamma$ is explicitly given for $\frac{2n}{n+1}<p<2$ by
\begin{equation*}\label{baren sol}
\Gamma(x,t) = t^{-n\alpha} \bgh{c(n,p) + \frac{2-p}{p} \alpha^{\frac{1}{p-1}} \gh{\frac{|x|}{t^\alpha}}^{\frac{p}{p-1}} }^{-\frac{p-1}{2-p}},
\end{equation*}
where $\alpha := \frac{1}{p(n+1) - 2n}$. The solution $\Gamma$ is well defined when $\alpha >0$; that is, $p > \frac{2n}{n+1}$.

Our aim of this paper is to establish Marcinkiewicz estimates for the spatial gradient of solutions to the singular parabolic measure data problems \eqref{E:PME} satisfying \eqref{structure} and \eqref{range of p}, under some decay conditions on the right-hand side measure (see Section~\ref{Main theorems}). To formulate our main results, we define certain so-called {\em Morrey-type condition} for a measure as follow. For a signed Radon measure $\mu$ with finite total mass, we say that $\mu$ satisfies a {\em Morrey-type condition}, written $\mu \in \LL^{1,\kappa}(\OO_T)$, provided that
\begin{equation*}
|\mu| (Q_{r}(z_0)) \le C_0 r^{N-\kappa} \quad (0 \le \kappa \le N, \ C_0 \ge 1)
\end{equation*}
holds for any standard parabolic cylinder $Q_{r}(z_0):= B_{r}(x_0) \times \gh{t_0 - r^{2}, t_0+r^{2}} \subset \OO_T$ of parabolic dimension $N:=n+2$.
Then we shall prove
\begin{equation*}\label{intro main reult}
  \mu \in \LL^{1,\kappa}(\OO_T) \ \ \text{for} \ \kappa_c < \kappa \le N \quad \Longrightarrow \quad Du \in \M_{\loc}^{\gamma}(\OO_T,\bb^n),
\end{equation*}
where two constants $\kappa_c = \kappa_c(n,\La_0,\La_1,p) \ge 1$ and $\gamma=\gamma(n,p,\kappa) \ge 1$ are determined explicitly later. Here $\M^{\gamma}(\OO_T,\bb^n)$ is the Marcinkiewicz space (see \eqref{Marcinkiewicz space} for definition).

To prove our main results, we use some covering arguments via a so-called {\em maximal function free technique} introduced in \cite{AM07} (see Section~\ref{la covering arguments}).
This approach is suitable to the situation in which it occurs the lack of homogeneity (roughly speaking it scales differently in time and space) of nonlinear parabolic problems, such as $p$-Laplacian with $p \neq 2$ or porous medium equation.
In this paper, this covering arguments are considered under the following intrinsic parabolic cylinders:
\begin{equation}\label{scaling}
  \frac{s}{r^{2}} = \lambda^{2-p} \ \ \text{with} \ \ \mint{Q_{r,s}(z_0)}{|Du|^{\theta}}{dxdt} \approx \lambda^{\theta} \ \ \text{for some} \ \theta \in (0,1),
\end{equation}
where $Q_{r,s}(z_0):=B_r(x_0) \times (t_0-s,t_0+s)$. We point out that the spatial gradient of a solution $u$ of \eqref{E:PME} may not belong to the $L^1$ space under \eqref{range of p}. For this reason, we need a notion of solution in a renormalized sense (see Definition~\ref{renormalsol}). Also, in this circumstance, we show a decay estimate  of the upper-level sets of $|Du|$ (see Section~\ref{Estimates on upper-level sets}), alongside with \eqref{scaling} and difference estimates (see Section~\ref{intrinsic comparison}).

There are several results concerning gradient regularity for parabolic $p$-Laplace type equations with measure data for $p>2-\frac{1}{n+1}$, as follows.
\begin{itemize}
  \item Potential estimates: e.g. \cite{DM11} for $p=2$, \cite{KM14b,KM14c} for $p\ge2$, and \cite{KM13b} for $2-\frac{1}{n+1} < p \le2$.
  \item Marcinkiewicz estimates: e.g. \cite{Bar14,BD18} for $p\ge2$, and \cite{Bar17} for $2-\frac{1}{n+1} < p <2$.
  \item Fractional differentiability: e.g. \cite{Bar14b,BH12,BCS21} for $p=2$.
  \item Calder\'{o}n-Zygmund type estimates: e.g. \cite{Ngu15,Ngu_pre,BP18b} for $p=2$, and \cite{BPS21} for $p>2-\frac{1}{n+1}$.
\end{itemize}

Compared to the results mentioned above, there are few regularity results for the case $\frac{2n}{n+1}< p \le 2-\frac{1}{n+1}$. We refer to \cite{PS22} for global Calder\'{o}n-Zygmund type estimates on nonsmooth domains. It is worthwhile to note that there are regularity estimates (see \cite{NP19,NP20,NP20b,DZ21}) for elliptic measure data problems with $1< p \le 2-\frac{1}{n}$.

The paper is organized as follows. In Section~\ref{preliminaries}, we introduce notation, terminologies and renormalized solutions, and we present our main results. In Section~\ref{intrinsic comparison}, we collect comparison estimates between our $p$-Laplace type problem and its reference problems. Finally, in Section~\ref{la covering arguments}, we prove our main results, by deriving decay estimates via covering arguments under intrinsic parabolic cylinders.

\section{Preliminaries and main results}\label{preliminaries}

    \subsection{Notation and definitions}\label{Notation and definitions}

We start with notations. Let us denote by $c$ a universal positive constant, which may change from line to line.  Let $B_{r} (x_0):=\mgh{ x \in \mathbb{R}^n : |x-x_0| < r }$ and $I_{r} (t_0) := \gh{t_0 - r^{2}, t_0+r^{2}}$. For $\lambda >0$, we denote by
\begin{equation*}\label{ipc}
Q_{r}^{\lambda} (z_0) := \underbrace{\mgh{ x \in \mathbb{R}^n : |x-x_0| < \lambda^{\frac{p-2}{2}}r }}_{=: B_{r}^{\lambda}(x_0)} \times I_{r}(t_0)
\end{equation*}
the {\em intrinsic parabolic cylinder} in $\bb^n \times \bb =: \bb^{n+1}$ with center $z_0:=(x_0,t_0) \in \bb^{n+1}$.
When $p=2$ or $\lambda=1$, $Q_{r}^{\lambda}(z_0) \equiv Q_{r}(z_0)$. Also we have $Q_{r}^{\lambda}(z_0) \subset Q_{r}(z_0)$ if $\lambda \geq 1$ and $p\le2$.
The concept of {\em intrinsic} means, roughly speaking, that the size of parabolic cylinders depends on the solution of a given PDE in some integral average sense; in particular, the formulation \eqref{scaling} can be rewritten as
\begin{equation*}\label{scaling2}
  \mint{Q_{r}^{\lambda}(z_0)}{|Du|^{\theta}}{dxdt} \approx \lambda^{\theta} \quad  \text{for some} \  \theta \in (0,1).
\end{equation*}
Under this setting, we shall consider various a priori estimates for the spatial gradient of a solution later.
We refer to \cite{DiB93,Urb08,KM14b} for further discussion about intrinsic scalings.

Let us define the truncation operator
\begin{equation}\label{T_k}
T_k(s) := \max\mgh{-k,\min\mgh{k,s}} \quad \text{for any} \ k>0 \ \text{and} \ s \in \bb.
\end{equation}
For each set $Q \subset \bb^{n+1}$, $|Q|$ is the $(n+1)$-dimensional Lebesgue measure of $Q$ and $\chi_Q$ is the usual characteristic function of $Q$.
For $f \in L_{loc}^1(\bb^{n+1})$, $\bar{f}_{Q}$ stands for the integral average of $f$ over a parabolic cylinder $Q \subset \bb^{n+1}$; that is,
$$\bar{f}_{Q} := \mint{Q}{f(z)}{dx dt} := \frac{1}{|Q|} \integral{Q}{f(z)}{dx dt}, \ \ \text{where } z:=(x,t).$$

Let us introduce a nonlinear parabolic capacity (see \cite{Pie83,DPP03,KKKP13,AKP15} for details), which is necessary to define our solution later.
For every open subset $Q \subset \OO_T$, the {\em $p$-parabolic capacity} of $Q$ is defined by
$$\capp(Q) := \inf\mgh{\|u\|_W : u \in W,   u \ge \chi_Q \ \text{a.e. in} \ \OO_T},$$
where $W := \mgh{u \in L^p (0,T; W_0^{1,p}(\OO)) : u_t \in L^{p'}(0,T; W^{-1,p'}(\OO))}$
endowed with the norm $\|u\|_W := \|u\|_{L^p (0,T; W_0^{1,p}(\OO))} + \|u_t\|_{L^{p'}(0,T; W^{-1,p'}(\OO))}.$
Here $p' := \frac{p}{p-1}$.

Let $\mathfrak{M}_b(\OO_T)$ (or $\mathfrak{M}_b(\OO)$, $\mathfrak{M}_b(0,T)$) be the space of all signed Radon measures on $\OO_T$ (or $\OO$, $(0,T)$, respectively) with finite total mass. Let $\mathfrak{M}_a(\OO_T)$ be the subspace of $\mathfrak{M}_b(\OO_T)$ of the measures that are absolutely continuous with respect to the $p$-parabolic capacity, let $\mathfrak{M}_s(\OO_T)$ be the space of finite signed Radon measures in $\OO_T$ with support on a set of zero $p$-parabolic capacity, and let $C_b(\OO_T)$ be the space of all bounded and continuous functions on $\OO_T$.  A measure $\mu \in \mathfrak{M}_b(\OO_T)$ can be written as a sum of two measures as follows: $\mu = \mu_a +\mu_s$, where $\mu_a \in \mathfrak{M}_a(\OO_T)$ and $\mu_s \in \mathfrak{M}_s(\OO_T)$, see \cite[Lemma 2.1]{FST91}. Also, $\mu_a \in \mathfrak{M}_a(\OO_T)$ if and only if $\mu_a = f + g_t + \ddiv G,$
where $f \in L^1(\OO_T)$, $g \in L^p (0,T; W_0^{1,p}(\OO))$ and $G \in L^{p'}(\OO_T)$, see \cite{DPP03, KR19}.
We denote by $\mu^+$ and $\mu^-$ the positive and negative parts of a measure $\mu \in \mathfrak{M}_b(\OO_T)$, respectively. We write $|\mu| := \mu^+ + \mu^-$. We say that a sequence $\{\mu_k\} \subset \mathfrak{M}_b(\OO_T)$ converges {\em tightly} (or {\em in the narrow topology of measures}) to $\mu \in \mathfrak{M}_b(\OO_T)$ if
$$\lim_{k\to\infty} \integral{\OO_T}{\varphi}{d\mu_k} =\integral{\OO_T}{\varphi}{d\mu}  \quad \text{for every} \ \varphi \in C_b(\OO_T).$$

Finally, we define a certain function space. For $0<\gamma<\infty$, the space $\M^{\gamma}(\OO_T,\bb^l)$ is the so-called {\em Marcinkiewicz space} (or the {\em weak-$L^{\gamma}$ space}), defined as the set of all measurable maps $f: \OO_T \to \bb^l$ such that
\begin{equation}\label{Marcinkiewicz space}
\|f\|_{\mathcal{M}^{\gamma}(\OO_T,\bb^l)} := \sup_{\la>0} \lambda \left|\{z\in \Omega_{T}: |f(z)| > \lambda \}\right|^{\frac{1}{\gamma}} < \infty.
\end{equation}
We observe the following connection between the Marcinkiewicz and Lebesgue spaces: $L^{\gamma}(\OO_T,\bb^l) \subset \M^{\gamma}(\OO_T,\bb^l) \subset L^{\gamma-\varepsilon}(\OO_T,\bb^l)$ for any $\varepsilon \in (0,\gamma)$.
We refer to \cite[Chapter 1]{Gra14} for various properties for the Marcinkiewicz space.


\subsection{Main results}\label{Main theorems}

We start by introducing a suitable notion of a solution. Our solution $u$ will be treated in a very weak sense because our solution does not generally belong to the usual energy space. Moreover, under \eqref{range of p}, the spatial gradient of a solution may not be in $L^1(\OO_T)$ (see \cite[Section 1.3]{KM13b}). To overcome this situation, we introduce the following: if $u$ is a measurable function defined in $\OO_T$ such that $u$ is finite almost everywhere and $T_k(u) \in \lpwwz$ for any $k>0$, then there exists a unique measurable function $U$ such that $DT_k(u)=U\chi_{\{|u|<k\}}$ a.e. in $\OO_T$ for all $k>0$.  We define the spatial gradient of $u$ as the function $U$ and denote $Du:=U$. 
Now we define the following notion of a solution.
\begin{definition}[See \cite{PP15}]\label{renormalsol}
Let $\mu = \mu_a + \mu_s \in \mathfrak{M}_b(\OO_T)$, where $\mu_a \in \mathfrak{M}_a(\OO_T)$ and $\mu_s \in \mathfrak{M}_s(\OO_T)$. A function $u \in L^1(\OO_T)$ is a {\em renormalized solution} of the Cauchy-Dirichlet problem
\begin{equation}
\label{pem1}\left\{
\begin{alignedat}{3}
u_t -\ddiv \mathbf{a}(Du,x,t)  &= \mu &&\quad \text{in}  \  \OO_T, \\
u  &= 0 &&\quad \text{on} \  \partial_p \OO_T,
\end{alignedat}\right.
\end{equation}
satisfying \eqref{structure} and \eqref{range of p} if $T_k(u) \in \lpwwz$ for every $k>0$ and the following property holds: for any $k>0$ there exist two sequences $\{\nu_k^+\}$, $\{\nu_k^-\}$ of nonnegative measures in $\mathfrak{M}_a(\OO_T)$ such that
\begin{equation*}
\nu_k^+ \to \mu_s^+,  \ \nu_k^- \to \mu_s^- \quad \text{tightly as} \ k\to \infty
\end{equation*}
and
\begin{equation}\label{tweaksol}
-\integral{\OO_T}{T_k(u)\varphi_t}{dxdt} + \integral{\OO_T}{\vgh{\mathbf{a}(DT_k(u),x,t), D\varphi}}{dxdt}
= \integral{\OO_T}{\varphi}{d\mu_k}
\end{equation}
for every $\varphi \in W \cap L^{\infty}(\OO_T)$ with $\varphi(\cdot,T)=0$, where $\mu_k := \mu_a + \nu_k^+ - \nu_k^-$.
\end{definition}
Here the parabolic boundary of $\OO_T$ is $\partial_p \OO_T:= \gh{\partial \OO \times [0,T]} \cup \gh{\OO \times \{0\}}$. We refer to \cite[Section 1.1]{PS22} and the references given there for further discussion of renormalized solutions.

Next, we define a density condition of a measure.
\begin{definition}\label{mudecay}
\begin{enumerate}[(i)]
  \item\label{mu} For $\mu \in \mathfrak{M}_b(\OO_T)$, we say that $\mu$ satisfies a {\em Morrey-type condition}, written $\mu \in \LL^{1,\kappa}(\OO_T)$, provided that
      \begin{equation*}
      |\mu| (Q_{r}(z_0)) \le C_0 r^{N-\kappa} \quad (0 \le \kappa \le N:= n+2, \ C_0 \ge 1)
      \end{equation*}
      holds for any standard parabolic cylinder $Q_{r}(z_0)\subset\OO_T$.

  \item\label{mu1} Similarly, for $\mu_1 \in \mathfrak{M}_b(\OO)$, we define
    \begin{equation*}
    \mu_1 \in \LL^{1,\kappa_1}(\OO) \iff |\mu_1|(B_r(x_0)) \le C_1 r^{n- \kappa_1} \quad (0 \le \kappa_1 \le n, \ C_1 \ge 1)
    \end{equation*}
    holds for any ball $B_{r}(x_0)\subset\OO$.

  \item\label{mu2} Also, for $\mu_2 \in \mathfrak{M}_b(0,T)$, we define
  \begin{equation*}
  \mu_2 \in \LL^{1,\kappa_2}(0,T) \iff |\mu_2|(I_{r}(t_0)) \le C_2 r^{2-\kappa_2} \quad (0 \le \kappa_2 \le 2, \ C_2 \ge 1)
  \end{equation*}
  holds for any interval $I_{r}(t_0)\subset (0,T)$.
\end{enumerate}
\end{definition}

Note that $\LL^{1,N}(\OO_T) \equiv \mathfrak{M}_b(\OO_T)$. For example, the Dirac measure charging a point in $\OO_T$ belongs to $\LL^{1,N}(\OO_T)$.

We are ready to state the first main result of this paper.
\begin{theorem}\label{main thm1}
  Let $\frac{2n}{n+1}<p\le 2-\frac{1}{n+1}$. There is a constant $\kappa_c = \kappa_c(n,\La_0,\La_1,p) \ge 1$ such that if $u$ is a renormalized solution of the problem \eqref{pem1} under \eqref{structure} with $\mu \in \LL^{1,\kappa}(\OO_T)$ for $\kappa_c < \kappa \le N$, then
  \begin{equation}\label{main thm1-r1}
    Du \in \M_{\loc}^{\gamma}(\OO_T,\bb^n), \ \ \text{where} \ \gamma := \frac{\kappa}{\kappa-1}\max\mgh{p-1, \ \frac{1}{2}\gh{p-\frac{n(2-p)}{\kappa}}}.
  \end{equation}
  Moreover, for any given $\theta$ satisfying
  \begin{equation}\label{ka-range}
  \max\mgh{\frac{n+2}{2(n+1)},\frac{n(2-p)}{2}} < \theta < p-\frac{n}{n+1} \le 1,
  \end{equation}
  there is a constant $c=c(n,\La_0,\La_1,p,\kappa,C_0,\theta) \ge 1$ such that
  \begin{equation}\label{main thm1-r2}
  \Abs{Du}_{\M^{\gamma}(Q_{R},\bb^n)}^\gamma \le cR^N \mgh{ \bgh{\frac{|\mu|(Q_{2R})}{|Q_{2R}|}}^{d} + \gh{\mint{Q_{2R}}{|Du|^{\theta}}{dxdt}}^{\frac{d\gamma}{\theta}} +1}
  \end{equation}
  for any standard parabolic cylinder $Q_{2R} \equiv Q_{2R}(z_0) \Subset \OO_T$, where the scaling deficit $d$ is defined by
  \begin{equation}\label{scaling deficit}
    d := \frac{2\theta}{2\theta-n(2-p)}.
  \end{equation}
\end{theorem}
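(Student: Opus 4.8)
The plan is to prove a level-set decay estimate of the form $|\{z \in Q_R : |Du(z)| > \lambda\}| \lesssim \lambda^{-\gamma} R^N(\cdots)$ for all sufficiently large $\lambda$, and then read off membership in $\M^\gamma_{\loc}$ and the quantitative bound \eqref{main thm1-r2} directly from the definition \eqref{Marcinkiewicz space}. The argument proceeds via the maximal-function-free covering technique of \cite{AM07}, but set inside the intrinsic parabolic cylinders dictated by \eqref{scaling}. First I would fix $\theta$ in the admissible range \eqref{ka-range}; the constraint $\theta > \max\{(n+2)/(2(n+1)), (2-p)n/2\}$ is precisely what guarantees that the scaling deficit $d$ in \eqref{scaling deficit} is positive and that $\theta < p - n/(n+1) \le 1$ makes the intrinsic average $\fint |Du|^\theta$ a meaningful subunitary quantity. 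One normalizes so that $\int_{Q_{2R}} |Du|^\theta$ and $|\mu|(Q_{2R})/|Q_{2R}|$ are controlled, and then, for each large $\lambda$, one performs a stopping-time/Calderón–Zygmund exit argument on the intrinsic cylinders $Q^\lambda_r(z_0)$: starting from cylinders on which $\fint |Du|^\theta \le \lambda^\theta$ holds and iterating down to the first scale where it exceeds this threshold, one obtains a Vitali-type family of intrinsic cylinders covering (up to negligible sets) the super-level set $\{|Du| > \lambda\}$, on each of which the intrinsic relation \eqref{scaling} holds by construction.

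Next I would invoke the comparison machinery of Section~\ref{intrinsic comparison}: on each selected intrinsic cylinder $Q^\lambda_r$, compare the renormalized solution $u$ with the solution $v$ of the homogeneous reference problem (same vector field, zero right-hand side) sharing boundary data; the difference is controlled by the measure density $|\mu|(Q^\lambda_r)$, which by hypothesis $\mu \in \LL^{1,\kappa}$ is bounded by $C_0 |Q^\lambda_r|^{1-\kappa/N}$. The reference solution $v$ enjoys the intrinsic Lipschitz/$L^\infty$-gradient bound $\sup_{Q^\lambda_{r/2}} |Dv| \lesssim \lambda$, so on the "good" part of each cylinder $|Du|$ cannot be much larger than $\lambda$, and the "bad" part — where $|D(u-v)|$ is large — has measure estimated by the comparison inequality. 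Summing the comparison estimates over the covering family, using the density bound on $|\mu|$ together with the scaling $s/r^2 = \lambda^{2-p}$ to convert powers of $r$ into powers of $\lambda$, produces a recursive inequality for $g(\lambda) := |\{|Du| > \lambda\} \cap Q_R|$ of the form $g(\lambda) \lesssim \lambda^{-\gamma}(\text{data})$ after one iterates/absorbs; the exponent $\gamma = \kappa((n+1)p-n)/((\kappa-1)(n+2))$ emerges from balancing the measure-density exponent $1 - \kappa/N$ against the intrinsic scaling exponents, and the threshold $\kappa_c$ is exactly the value of $\kappa$ below which this bookkeeping forces $\gamma \le 0$ (equivalently $\kappa_c$ is where $(n+1)p - n$ and the deficit combine critically). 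Finally, undoing the normalization reinstates the bracketed data term $[|\mu|(Q_{2R})/|Q_{2R}|]^d + (\fint_{Q_{2R}} |Du|^\theta)^{d\gamma/\theta} + 1$ with the deficit power $d$, giving \eqref{main thm1-r2}, and taking the supremum over $\lambda$ gives \eqref{main thm1-r1}.

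The main obstacle I anticipate is the bookkeeping of exponents through the intrinsic rescaling: because the cylinders $Q^\lambda_r$ scale anisotropically (factor $\lambda^{(p-2)/2}$ in space, $\lambda^{2-p}$ in time), every application of a comparison estimate or a measure-density bound must be tracked with the correct power of $\lambda$, and the self-improving iteration that removes the scaling deficit $d$ requires that the resulting recursion for $g(\lambda)$ have a contractive structure — this is where the lower bound $\kappa > \kappa_c$ and the upper bound $\theta < p - n/(n+1)$ are genuinely used, and getting the algebra to close with the stated $\gamma$ is the delicate point. A secondary technical difficulty is that, under \eqref{range of p}, $Du$ need not be in $L^1$, so all manipulations must be carried out with truncations $T_k(u)$ and the renormalized formulation \eqref{tweaksol}, passing to the limit $k \to \infty$ only at the end; one must check that the covering and comparison estimates are stable under this truncation, which is where the a priori decay estimate on upper-level sets of $|Du|$ (Section~\ref{Estimates on upper-level sets}) does the heavy lifting.
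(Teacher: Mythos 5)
Your proposal has the right skeleton (stopping-time argument on intrinsic cylinders, Vitali covering, comparison with homogeneous reference problems, then reading off the Marcinkiewicz norm from a level-set decay followed by an absorption argument in the radius); this is essentially the route of Section~\ref{la covering arguments}. However, there is one concrete gap that invalidates the exponent bookkeeping and your account of $\kappa_c$.

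You assert that the homogeneous reference solution ``enjoys the intrinsic Lipschitz/$L^\infty$-gradient bound $\sup |Dv| \lesssim \lambda$.'' Under the hypotheses of Theorem~\ref{main thm1} --- the structure conditions \eqref{structure} with coefficients only measurable in $(x,t)$ --- this is false. An $L^\infty$ gradient bound for the comparison solution requires the stronger regularity \eqref{str1} together with the $(\delta,R_0)$-BMO condition \eqref{2-small BMO2}, which are hypotheses of Theorem~\ref{main thm4}, not of Theorem~\ref{main thm1}. For Theorem~\ref{main thm1} the only available tool on the reference problem is the higher integrability of Lemma~\ref{high int}, namely $\fint_{\iq{2r}} |Dw|^{p(1+\sigma)}\,dxdt \le c\lambda^{p(1+\sigma)}$. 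Consequently the ``good'' term in the level-set decomposition does not vanish but contributes $c H^{-p(1+\sigma)} |Q_i^2|$ (as in \eqref{est5.95}), and to close the iteration after multiplying the decay estimate by $(H\lambda)^\gamma$ one must be able to choose $H$ large enough that $c H^{-(p(1+\sigma)-\gamma)} \le \tfrac14$. This forces $\gamma < p(1+\sigma)$, and $\kappa_c$ is defined precisely by $\frac{\kappa_c((n+1)p-n)}{(\kappa_c-1)(n+2)} = p(1+\sigma)$. Your description of $\kappa_c$ as ``the value of $\kappa$ below which this bookkeeping forces $\gamma \le 0$'' is therefore incorrect on two counts: $\gamma > 0$ for every $\kappa > 1$, and the operative constraint is the upper bound $\gamma < p(1+\sigma)$ coming from the higher-integrability exponent, not a positivity requirement. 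Had the $L^\infty$ bound been available, the recursion would close for every $\kappa > 1$ with no threshold at all, which would derive the conclusion of Theorem~\ref{main thm4} from the weaker hypotheses of Theorem~\ref{main thm1} --- a contradiction that signals the missing ingredient. The remaining ingredients you list (working with truncations $T_k(u)$ and passing $k,l \to \infty$, the iteration lemma in the radius variable) do match the paper's proof, so repairing this one step by replacing the $L^\infty$ bound with Lemma~\ref{high int} and tracking the resulting constraint $\gamma < p(1+\sigma)$ would bring your outline into agreement with the argument actually given.
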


Theorem~\ref{main thm1} provides a precise quantitative estimate of the spatial gradient of a renormalized solution in terms of the Marcinkiewicz space, under the assumption that the measure on the right-hand side satisfies the Morrey-type condition. Roughly speaking, the less concentrated the measure $\mu$ is (i.e. the smaller $\kappa$ is), the better the integrability of $Du$ is (i.e. the bigger $\gamma$ is).

\begin{remark}\label{main rmk1}
\begin{enumerate}[(i)]
\item Note that the value of $\gamma$ in \eqref{main thm1-r1} is
\begin{equation*}
\gamma = \left\{
\begin{alignedat}{3}
&\frac{\kappa(p-1)}{\kappa-1} &&\quad \text{if } \ 1<\kappa\le n,\\
&\frac{\kappa}{2(\kappa-1)}\gh{p-\frac{n(2-p)}{\kappa}} &&\quad \text{if }\ n \le \kappa \le N.
\end{alignedat}\right.
\end{equation*}

\item\label{R:m2} When $\kappa=N$, the value $\gamma$ has the minimum $p-\frac{n}{n+1}$. Also, $\gamma \nearrow p(1+\sigma)$ when $\kappa \searrow \kappa_c$, see Section~\ref{Marcinkiewicz estimates} for details.  Here $\sigma$ is the constant coming from a higher integrability for homogeneous problems (Lemma~\ref{high int}). Thus, we have
    $$p-\frac{n}{n+1} \le \gamma < p(1+\sigma) \quad \text{for } \ \kappa_c < \kappa \le N.$$

\item As $p\searrow \frac{2n}{n+1}$, the constant $c$ in \eqref{main thm1-r2} blows up.

\item The ranges of both exponents $p$ and $\theta$ in Theorem~\ref{main thm1} come from combining Lemmas~\ref{compa1} and \ref{high int} below. Also, the exponent $\theta$ is not empty since $p>\frac{2n}{n+1}$; see Remark~\ref{comparison remark} for details.

\item The scaling deficit like \eqref{scaling deficit} occurs when we study regularity theories for PDE having anisotropic structures such as parabolic $p$-Laplace ($p \neq 2$) equations, see for example  \cite{KL00,AM07,KM13b,KM14b,KM14c,Bar14,Bar17,BD18,BPS21,PS22,Bog07,BOR13}.
\end{enumerate}
\end{remark}

Theorem~\ref{main thm1} also gives the following direct consequence when $\mu$ is merely a finite signed Radon measure (i.e. $\mu \in \LL^{1,N}(\OO_T) \equiv \mathfrak{M}_b(\OO_T)$). This result was found in \cite{AMST99} without a local estimate.
\begin{corollary}\label{main thm2}
  Let $\frac{2n}{n+1}<p\le 2-\frac{1}{n+1}$. If $u$ is a renormalized solution of \eqref{pem1} under \eqref{structure} satisfying $\mu \in \mathfrak{M}_b(\OO_T)$, then $Du \in \M_{\loc}^{p-\frac{n}{n+1}}(\OO_T,\bb^n)$. Moreover, the estimate \eqref{main thm1-r2} holds for $\kappa=N$ and $\gamma=p-\frac{n}{n+1}$.
\end{corollary}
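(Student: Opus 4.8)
The plan is to obtain Corollary~\ref{main thm2} as a pure specialization of Theorem~\ref{main thm1} to the endpoint $\kappa = N$. The first observation is that any $\mu \in \mathfrak{M}_b(\OO_T)$ automatically belongs to $\LL^{1,N}(\OO_T)$: for $\kappa = N = n+2$ the defining inequality of Definition~\ref{mudecay} reads $|\mu|(Q) \le C_0\,|Q|^{1-N/N} = C_0$, and this holds for every parabolic cylinder (standard or intrinsic) $Q \subset \OO_T$ with the admissible choice $C_0 := \max\{|\mu|(\OO_T),\,1\} \ge 1$, simply because $|\mu|(Q) \le |\mu|(\OO_T)$. Thus $\mathfrak{M}_b(\OO_T) = \LL^{1,N}(\OO_T)$, as already recorded in the Remark following Definition~\ref{mudecay}.

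Next I would verify that $\kappa = N$ lies in the range $\kappa_c < \kappa \le N$ required by Theorem~\ref{main thm1}, i.e.\ that $\kappa_c < N$, so that the hypothesis is nonempty at its right endpoint. This follows from the characterization of $\kappa_c$ recalled in Remark~\ref{main rmk1}: the map $\kappa \mapsto \gamma(\kappa) = \frac{\kappa((n+1)p-n)}{(\kappa-1)(n+2)}$ is strictly decreasing on $(1,\infty)$ because $\kappa/(\kappa-1) = 1 + 1/(\kappa-1)$ is, and $\kappa_c$ is determined by $\gamma(\kappa_c) = p(1+\sigma)$ with $\sigma > 0$; since $\gamma(N) = p - \frac{n}{n+1} < p < p(1+\sigma)$, monotonicity forces $\kappa_c < N$. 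Hence Theorem~\ref{main thm1} is applicable with $\kappa = N$ and any admissible auxiliary exponent $\theta$ (whose range \eqref{ka-range} does not depend on $\kappa$), on cylinders $Q_{2R} \Subset \OO_T$.

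Finally I would evaluate the conclusions at $\kappa = N$. Substituting $\kappa = n+2$ into the formula for $\gamma$ gives $\gamma = \frac{(n+2)((n+1)p-n)}{(n+1)(n+2)} = \frac{(n+1)p-n}{n+1} = p - \frac{n}{n+1}$, which yields $Du \in \M_{\loc}^{p-n/(n+1)}(\OO_T)$; and the quantitative bound \eqref{main thm1-r2}, read with this $\kappa$ and $\gamma$, is precisely the asserted estimate (the constant now additionally depending on $|\mu|(\OO_T)$ through $C_0$). There is no substantive obstacle here, since the statement is a corollary; the only points needing a word of justification are the identification $\mathfrak{M}_b(\OO_T) = \LL^{1,N}(\OO_T)$ and the strict inequality $\kappa_c < N$ that keeps the hypothesis range nonempty at its endpoint $\kappa = N$.
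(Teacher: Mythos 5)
Your proposal is correct and is precisely the specialization the paper intends: the paper itself gives no separate proof, merely stating that the corollary is a ``direct consequence'' of Theorem~\ref{main thm1}, and your verification (that $\mathfrak{M}_b(\OO_T) = \LL^{1,N}(\OO_T)$ with $C_0 = \max\{|\mu|(\OO_T),1\}$, that $\kappa\mapsto\gamma(\kappa)$ is strictly decreasing so $\kappa_c<N$, and that $\gamma(N)=p-\frac{n}{n+1}$) fills in exactly the details left implicit. The one point you rightly flag — that via $C_0$ the constant in \eqref{main thm1-r2} now depends on $|\mu|(\OO_T)$ — is an honest observation consistent with the stated dependence $c=c(n,\La_0,\La_1,p,\kappa,C_0,\theta)$.
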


\begin{remark}
In the case $\mu \in \mathfrak{M}_b(\OO_T)$, there are many results regarding the existence of a solution, such that
\begin{equation*}
  Du \in L_{loc}^q(\OO_T,\bb^n) \quad \text{for all } \ q < p-\frac{n}{n+1},
\end{equation*}
see for instance \cite{BG89,BDGO97,Pri97,Pet08,PPP11}. We also refer to \cite{Min11b,PS22} for further discussions in the literature. Thus Corollary~\ref{main thm2} provides a sharp integrability of $Du$.
\end{remark}

Next, if the measure $\mu$ can be decomposed into space and time components (see \eqref{decom} below), then we obtain the following Marcinkiewicz bound.
\begin{theorem}\label{main thm3}
Let $\frac{2n}{n+1}<p\le 2-\frac{1}{n+1}$ and let $u$ be a renormalized solution of the problem \eqref{pem1} under \eqref{structure}. Suppose that the following decomposition holds:
\begin{equation}\label{decom}
    \mu = \mu_1 \otimes \mu_2,
\end{equation}
where $\mu_1 \in L^{\infty}(\OO)$ and $\mu_2 \in \LL^{1,\kappa_2}(0,T)$ for some $\kappa_2 \in (1,2]$. Then there exists a constant $\kappa_{2,c} = \kappa_{2,c}(n,\La_0,\La_1,p) \ge 1$ such that  for $\kappa_{2,c} <\ka_2 \le2$, we have
\begin{equation}\label{def:gamma2}
Du \in \M_{\loc}^{\gamma_2}(\OO_T,\bb^n), \ \ \text{where} \ \gamma_2 := \frac{p\kappa_2}{2(\kappa_2-1)}.
\end{equation}
Moreover, for any $Q_{2R}(z_0) \Subset \OO_T$, a local estimate similar to \eqref{main thm1-r2} holds with $\gamma_2$ replacing $\gamma$.
\end{theorem}

\begin{remark}\label{R:mm}
\begin{enumerate}[(i)]
  \item 
      By analogy with Remark~\ref{main rmk1}\,\eqref{R:m2}, we see that $p\le\gamma_2 < p(1+\sigma)$ for $\kappa_{2,c} < \kappa_2 \le 2$.

  \item Note that $\gamma_2\ge \gamma$ for $1<\kappa=\kappa_2 \le2$. Under \eqref{decom}, \eqref{def:gamma2} gives a higher integrability compared to \eqref{main thm1-r1}.

  \item In the case $\mu_1 \in \LL^{1,\kappa_1}(\OO)$ for $\kappa_1 \in (1,n]$ and $\mu_2 \in L^{\infty}(0,T)$, we can obtain an integrability of $Du$ like \eqref{def:gamma2}. However, in our approach, this integrability is not improved compared to Theorem~\ref{main thm1}; see \cite[Section 1]{Bar17} for detailed explanations.
\end{enumerate}
\end{remark}

Theorem~\ref{main thm1} can be improved under more regular vector field than \eqref{structure}. We consider the vector field $\mathbf{a}=\mathbf{a}(\xi,x,t)$ measurable in $(x,t)$ and $C^1$-regular in $\xi$, satisfying
\begin{equation}\label{str1}
\begin{cases}
|\mathbf{a}(\xi,x,t)| + |\xi||D_{\xi}\mathbf{a}(\xi,x,t)| \le \Lambda_1 |\xi|^{p-1},\\
\Lambda_0 |\xi|^{p-2}|\eta|^2 \le \left< D_{\xi}\mathbf{a}(\xi,x,t)\eta,\eta \right>
\end{cases}
\end{equation}
for a.e. $(x, t) \in \mathbb{R}^n \times \bb$, for every $\eta \in \mathbb{R}^n$, $\xi \in \bb^n
\setminus \{0\}$ and for some $\Lambda_1 \ge \Lambda_0 >0$.
Note that \eqref{str1} implies the monotonicity condition $\eqref{structure}_2$. For an improvement of Theorem~\ref{main thm1}, we also need the following condition. Let $\delta,R_0>0$. We say that the vector field $\mathbf{a}(\xi,x,t)$ is {\em $(\delta,R_0)$-BMO} if
\begin{equation}\label{2-small BMO2}
\sup_{t_1,t_2 \in \bb} \sup_{0<r \le R_0} \sup_{y\in\bb^n} \fint_{t_1}^{t_2} \mint{B_r(y)}{\Theta\gh{\mathbf{a},B_r(y)}(x,t)}{dx dt} \le \delta,
\end{equation}
where
\begin{equation*}\label{2-AA}
\Theta\gh{\mathbf{a},B_r(y)}(x,t) := \sup_{\xi \in \bb^n \setminus \{0\}} \left|\frac{\mathbf{a}(\xi,x,t)}{|\xi|^{p-1}} - \mint{B_r(y)}{\frac{\mathbf{a}(\xi,\tilde{x},t)}{|\xi|^{p-1}}}{d\tilde{x}}  \right|.
\end{equation*}
We remark that the map $x \mapsto \frac{\mathbf{a}(\xi,x,t)}{|\xi|^{p-1}}$ is of BMO (Bounded Mean Oscillation) such that its BMO seminorm is less than $\delta$, uniformly in $\xi$ and $t$. This condition allows merely measurability in $t$-variable and discontinuity in $x$-variable. It also includes VMO (Vanishing Mean Oscillation) condition.

Finally, we obtain the following regularity result.
\begin{theorem}\label{main thm4}
  Let $\frac{2n}{n+1}<p\le 2-\frac{1}{n+1}$. Assume that the vector field $\mathbf{a}$ satisfies \eqref{str1} and a $(\delta,R_0)$-BMO condition for some $\delta, R_0>0$. If $u$ be a renormalized solution of the problem \eqref{pem1} with $\mu \in \LL^{1,\kappa}(\OO_T)$ for $1 < \kappa \le N$, then we have $Du \in \M_{\loc}^{\gamma}(\OO_T,\bb^n)$, where $\gamma$ is given by \eqref{main thm1-r1} with the range $p-\frac{n}{n+1} \le \gamma < \infty$. Moreover the estimate \eqref{main thm1-r2} holds.
\end{theorem}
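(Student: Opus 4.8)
The plan is to rerun the proof of Theorem~\ref{main thm1} essentially verbatim, upgrading only the two ingredients that are responsible for the restriction $\kappa>\kappa_c$: the intrinsic comparison estimate (the analogue of Lemma~\ref{compa1}) and the higher integrability of the gradient for the homogeneous reference problem (Lemma~\ref{high int}). In the setting of Theorem~\ref{main thm1} the reference problem is only a homogeneous $p$-Laplace-type equation with \emph{measurable} coefficients, so the best available gain is the self-improving exponent $p(1+\sigma)$; by Remark~\ref{main rmk1}\,\eqref{R:m2} this is precisely what caps $\gamma<p(1+\sigma)$ and forces $\kappa>\kappa_c$. Under the extra hypotheses \eqref{str1} and the $(\delta,R_0)$-BMO condition \eqref{2-small BMO2}, the relevant reference equation — the homogeneous equation obtained after freezing the $x$-dependence and performing the intrinsic rescaling attached to \eqref{scaling} — instead admits interior gradient bounds in $L^{\infty}_{\loc}$ for the $x$-frozen problem (DiBenedetto's $C^{1,\alpha}$-theory \cite{DiB93}), and hence in $L^q_{\loc}$ for \emph{every} finite $q$ for the $x$-dependent BMO problem, with constants depending only on $n,p,\La_0,\La_1,q$ once $\delta$ is chosen small in terms of these quantities and of $q$. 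Heuristically the exponent $\sigma$ of Lemma~\ref{high int} becomes ``$+\infty$'', so the critical threshold (defined by $\gamma=p(1+\sigma)$) collapses to $1$ and $\gamma$ is free to run over $[\,p-\tfrac{n}{n+1},\,\infty)$.

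First I would establish the improved comparison step. On an intrinsic cylinder $\iq{2r}(z_0)\Subset\OO_T$ on which the intrinsic relation \eqref{scaling} is realized, compare $u$ successively with the solution $v$ of the homogeneous equation with the \emph{same} coefficients on $\iq{r}(z_0)$, and then with the solution $w$ of the $x$-frozen homogeneous equation on $\iq{r/2}(z_0)$. The first comparison (which does not use the BMO hypothesis) controls $\mint{\iq{r}}{|Du-Dv|^{\theta}}{dxdt}$ by a power of the intrinsic density $|\mu|(\iq{2r})/|\iq{2r}|$, exactly as in Lemma~\ref{compa1}; the second controls $\mint{\iq{r/2}}{|Dv-Dw|^{\theta}}{dxdt}$ by a small power of $\delta$ times $\lambda^{\theta}$, by a BMO-perturbation argument as in \cite{AM07,PS22}; and the $C^{1,\alpha}$-theory for the $x$-frozen homogeneous equation under \eqref{str1} gives $\|Dw\|_{L^{\infty}(\iq{r/4})}\le c\lambda$, whence $\mint{\iq{r/4}}{|Dw|^{q}}{dxdt}\le c\lambda^{q}$ for every $q<\infty$ — the quantity that replaces $p(1+\sigma)$. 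Two bookkeeping facts must be checked here: that \eqref{str1} is preserved, with the \emph{same} ellipticity constants, under the intrinsic dilation $x\mapsto\lambda^{\frac{p-2}{2}}x$, $t\mapsto\lambda^{2-p}t$ (it is, by $p$-homogeneity of $\ma$ in $\xi$); and that the $(\delta,R_0)$-BMO condition \eqref{2-small BMO2} passes to the rescaled field (it does, at least for the relevant range of $\lambda$ — which in the super-level-set argument is bounded below, so that the dilation only shrinks spatial balls — since the averaged oscillation $\Theta(\ma,B_r)$ is non-increasing in $r$). This guarantees that all constants above are uniform in $\lambda$, $r$ and $z_0$.

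Next I would feed this into the stopping-time/covering machinery of Section~\ref{la covering arguments}, which can then be used unchanged: a Vitali-type covering of the super-level set $\{|Du|>\lambda\}$ by intrinsic cylinders, an exit-time construction producing cylinders on which \eqref{scaling} holds, and the resulting decay estimate for $|\{|Du|>\lambda\}\cap Q_R|$. With the comparison estimate above, the reference gradient now contributes to this decay a term of \emph{arbitrarily high} integrability (no longer capped at $p(1+\sigma)$), so that summation over the covering yields a bound $|\{|Du|>\lambda\}\cap Q_R|\le c\lambda^{-\gamma}(\cdots)$ with $\gamma=\frac{\kappa((n+1)p-n)}{(\kappa-1)(n+2)}$ for \emph{every} $\kappa\in(1,N]$, and not merely for $\kappa>\kappa_c$. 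Optimizing in the free parameters exactly as in the proof of Theorem~\ref{main thm1} then gives $Du\in\M^{\gamma}_{\loc}(\OO_T)$, with $p-\tfrac{n}{n+1}\le\gamma<\infty$, together with the quantitative estimate \eqref{main thm1-r2} — still for any $\theta$ in the range \eqref{ka-range} and with the scaling deficit $d$ of \eqref{scaling deficit}.

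The main obstacle I anticipate is not conceptual — small-BMO gradient regularity for the reference problem is by now classical — but lies in the uniformity bookkeeping: one must prove $L^q$ gradient estimates, valid for \emph{every} $q<\infty$, for the \emph{intrinsically rescaled} $x$-frozen (respectively BMO) homogeneous equation, with constants independent of $\lambda$, $r$ and $z_0$, by coupling DiBenedetto-type interior estimates for the $\xi$-constant problem with a Calder\'on--Zygmund/BMO-perturbation step; and then, for each target $\kappa>1$, one must choose the threshold $q=q(n,p,\kappa)$ large enough and $\delta=\delta(n,p,\La_0,\La_1,\kappa)$ small enough that the good-$\lambda$ inequality and the summation step close — the closer $\kappa$ is to $1$, the larger $q$ and hence the smaller the admissible $\delta$. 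This is exactly why the hypothesis ``$(\delta,R_0)$-BMO for some $\delta,R_0>0$'' should be read as ``for $\delta$ sufficiently small in terms of $n,p,\La_0,\La_1$ and $\kappa$''; once $\delta$ is so fixed, the entire iteration behind Theorem~\ref{main thm1} goes through with $\kappa_c$ replaced by $1$.
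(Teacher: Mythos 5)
Your proposal is correct and follows essentially the same route as the paper: insert an additional comparison with the coefficient-frozen homogeneous problem (the paper's Lemma~\ref{fcompa1} / Proposition~\ref{comparison2}), invoke DiBenedetto's interior $C^{1,\alpha}$ theory to get $\|Dv\|_{L^\infty(\iq{r})}\le c_0\lambda$ on the intrinsically rescaled cylinders, and observe that this removes the cap $\gamma<p(1+\sigma)$ so the stopping-time argument of Section~\ref{la covering arguments} closes for every $\kappa\in(1,N]$. The one small difference is cosmetic: where you pass from the $L^\infty$ bound to $L^q$ bounds with $q$ arbitrary and track $q$-dependence of constants, the paper uses the $L^\infty$ bound directly, choosing $H\ge\max\{4,3c_0\}$ so that the third term in the level-set decomposition (the one bounded via the reference gradient) is identically zero, which spares a layer of bookkeeping — the decay inequality \eqref{w:est12} then simply has no $c/H^{p(1+\sigma)}$ term, and only $\delta$ (not $H$) needs to be taken $\kappa$-dependent at the end.
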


We emphasize that all the main results in this section are consistent with the results in \cite{Bar17} for the singular case $2-\frac{1}{n+1} < p <2$. Specifically, the exponents $\gamma$ and $\gamma_2$ in \eqref{main thm1-r1} and \eqref{def:gamma2} are precisely the same as those in \cite{Bar17}, and so is the Marcinkiewicz estimate \eqref{main thm1-r2} with $\theta=1$. Thus, our results naturally extend the results in \cite{Bar17} to another singular case $\frac{2n}{n+1}<p\le 2-\frac{1}{n+1}$. We refer to \cite{Bar14,BD18} for the degenerate case $p\ge2$. See also \cite{Min07,Min10} for counterparts of elliptic problems.


\section{Comparison estimates}\label{intrinsic comparison}

In this section we derive comparison estimates between the problem \eqref{pem1} and its references problems, under the assumptions on the vector field $\mathbf{a}(\xi,x,t)$ and the measure $\mu$. (see Propositions~\ref{comparison1} and \ref{comparison2}).
From Definition~\ref{renormalsol}, we may regard $T_k(u) \in \lpwwz$ as a weak solution of \eqref{tweaksol} with $\mu_k \in L^{p'}(0,T; W^{-1,p'}(\OO))$. Throughout this section, we replace $T_k(u)$ by $u$ and $\mu_k$ by $\mu$.

Let $w$ be the unique weak solution to the Cauchy-Dirichlet problem
\begin{equation}
\label{pem2}\left\{
\begin{alignedat}{3}
w_t -\ddiv \mathbf{a}(Dw,x,t) &= 0 &&\quad \text{in} \ \iq{4r}(z_0) \Subset \OO_T, \\
w &= u &&\quad\text{on} \ \partial_p \iq{4r}(z_0),
\end{alignedat}\right.
\end{equation}
where the vector field $\mathbf{a}$ satisfies \eqref{structure}.
In this section, we for simplicity omit the center $z_0$ in $Q_{4r}^\la(z_0)$.

We first give a comparison estimate for $Du-Dw$ as follows:
\begin{lemma}[See {\cite[Lemma 3.1]{PS22}}]\label{compa1}
Let $\frac{3n+2}{2n+2}<p\le 2-\frac{1}{n+1}$, let $u$ be a weak solution of \eqref{tweaksol} and let $w$ as in \eqref{pem2} under \eqref{structure}.
Then there exists a constant $ c=c(n,\La_0,p,\theta)\ge1$ such that
\begin{equation*}\label{compa1-r}
\begin{aligned}
\gh{\mint{\iq{4r}}{|Du-Dw|^{\theta}}{dxdt}}^{\frac{1}{\theta}} &\le c \bgh{ \frac{|\mu|(\iq{4r})}{|\iq{4r}|^{\frac{n+1}{n+2}}} }^{\frac{n+2}{p(n+1)-n}}\\
&\quad + c \bgh{ \frac{|\mu|(\iq{4r})}{|\iq{4r}|^{\frac{n+1}{n+2}}} } \gh{\mint{\iq{4r}}{|Du|^{\theta}}{dxdt}}^{\frac{(2-p)(n+1)}{\theta(n+2)}}
\end{aligned}
\end{equation*}
for any constant $\theta$ such that $\frac{n+2}{2(n+1)} < \theta < p - \frac{n}{n+1} \le 1$.
\end{lemma}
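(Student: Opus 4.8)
The plan is to obtain the comparison estimate by testing the equation satisfied by the difference $v := u - w$ with a suitable truncation of $v$ itself, exploiting the monotonicity condition $\eqref{structure}_2$, and then converting the resulting energy bound into the sub-$L^1$ gradient estimate in \eqref{compa1-r} via a standard interpolation/parabolic-Sobolev argument adapted to the intrinsic cylinder $\iq{4r}$. First I would record that $v$ solves, in the weak sense on $\iq{4r}$, the equation $v_t - \ddiv(\mathbf{a}(Du,x,t) - \mathbf{a}(Dw,x,t)) = \mu$ with $v = 0$ on $\partial_p \iq{4r}$. Testing formally with $T_k(v)$ and handling the time term via Landes-type regularization (mollification in time of $v$, then passing to the limit), one gets
\begin{equation*}
\int_{\{|v|<k\}} \langle \mathbf{a}(Du,x,t) - \mathbf{a}(Dw,x,t), Dv\rangle\,dxdt \le k\,|\mu|(\iq{4r}) + (\text{boundary term at final time}),
\end{equation*}
and the boundary term is nonnegative when discarded the right way (it contributes $\int \Phi_k(v)(\cdot,T')\,dx \ge 0$ for the primitive $\Phi_k$ of $T_k$). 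Using $\eqref{structure}_2$ on the left, this yields the basic Caccioppoli-type inequality
\begin{equation*}
\int_{\{|v|<k\}} (|Du|^2+|Dw|^2)^{\frac{p-2}{2}}|Dv|^2\,dxdt \le c\,k\,|\mu|(\iq{4r}).
\end{equation*}

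Next I would convert this into an estimate for $\int_{\iq{4r}} |Dv|^\theta$ with $\theta < 1$. The obstacle here — and the reason the exponent range $\frac{n+2}{2(n+1)} < \theta < p - \frac{n}{n+1}$ appears — is that $p-2 < 0$, so the weight $(|Du|^2+|Dw|^2)^{\frac{p-2}{2}}$ degenerates where the gradients are large. The standard device is to split $\{|Dv| > s\}$ according to whether $|Du|+|Dw|$ is larger or smaller than a threshold $\sim$ (a power of $s$), use the weighted energy bound on the small-gradient part and a crude bound plus higher integrability / the a priori $L^\theta$ bound on $Du$ (hence $Dw$, by energy comparison for \eqref{pem2}) on the large-gradient part, optimize the threshold, and integrate the distributional inequality for $|Dv|$. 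This produces a bound of the form $\fint |Dv|^\theta \lesssim \big(\text{something}\big)$ in which the level $k$ is still free; one then chooses $k$ optimally (as a power of $|\mu|(\iq{4r})/|\iq{4r}|^{\frac{n+1}{n+2}}$ times a power of $\fint |Du|^\theta$) — this is exactly where the two terms with exponents $\frac{n+2}{(n+1)p-n}$ and $\frac{(2-p)(n+1)}{\theta(n+2)}$ on the right of \eqref{compa1-r} are generated, and where the intrinsic scaling $|\iq{4r}| \sim \lambda^{\frac{(p-2)n}{2}} r^{n+2}$ must be tracked carefully to keep the estimate scale-invariant.

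I expect the main obstacle to be the rigorous justification of the testing step with $T_k(v)$ — since $u$ is only a renormalized/very weak solution and $v$ need not lie in the energy space, one must work at the level of the truncated equation \eqref{tweaksol} (which, as the paper notes, holds in a genuine weak sense with $\mu_k \in L^{p'}(0,T;W^{-1,p'})$), use $T_k(u) \to u$, and pass to the limit in $k$ controlling the measure terms $\mu_k \to \mu$ tightly; the time-derivative term requires the Steklov/mollification argument to make sense of $\int v_t T_k(v)$. A secondary technical point is obtaining the needed higher integrability of $Dw$ on a slightly smaller intrinsic cylinder (this is the role of a Gehring-type lemma, presumably Lemma~\ref{high int} referenced in the paper) so that the large-gradient part of the splitting can be absorbed. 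Since the statement explicitly cites \cite[Lemma 3.1]{PS22}, I would in practice quote that proof and only indicate the adaptation to the present notation and the intrinsic cylinder $\iq{4r}$; the lower bound $p > \frac{3n+2}{2n+2}$ is precisely what makes the exponent interval for $\theta$ nonempty.
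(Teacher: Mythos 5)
The paper does not prove this lemma internally; it cites \cite[Lemma 3.1]{PS22} (which in turn is modeled on \cite[Lemma 4.3]{KM13b}), so there is no in-paper proof to compare against. On the merits, your roadmap is essentially the right one: test the equation for $v:=u-w$ with $T_k(v)$ (justified by Steklov averaging), use the monotonicity $\eqref{structure}_2$ to obtain the weighted Caccioppoli bound $\int_{\{|v|<k\}}(|Du|^2+|Dw|^2)^{\frac{p-2}{2}}|Dv|^2 \le ck\,|\mu|(\iq{4r})$, convert this into an $L^\theta$ bound on $Dv$ via a good-$\lambda$/H\"older argument against the degenerate weight, and finally optimize in $k$ using the parabolic Sobolev control on the level sets of $v$; tracking the aspect ratio of $\iq{4r}$ is what makes the resulting estimate self-improving under the intrinsic geometry. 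Your identification of the nonemptiness condition $p>\frac{3n+2}{2n+2}\iff \frac{n+2}{2(n+1)}<p-\frac{n}{n+1}$ is also correct.

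Two points should be corrected. First, the Gehring/higher-integrability lemma is \emph{not} used here: Lemma~\ref{high int} concerns $Dw$ alone and only enters later, when Lemma~\ref{compa1}, Lemma~\ref{compa2} and Lemma~\ref{high int} are bundled into Proposition~\ref{comparison1}. What you actually need on the ``large-gradient'' side is an $L^\theta$ comparison for the homogeneous solution, $\fint_{\iq{4r}}|Dw|^\theta \le c\,\fint_{\iq{4r}}|Du|^\theta$ with $\theta<1$, which is a separate (non-Gehring) lemma; attributing this to Lemma~\ref{high int} would make the constant depend on $\La_1$ as well and would also impose the extra restriction $\theta>\frac{(2-p)n}{2}$, neither of which appears in the statement of Lemma~\ref{compa1}. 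Second, the concern that testing with $T_k(v)$ is hard to justify because $u$ is only renormalized is moot here: as the paper notes at the start of Section~\ref{intrinsic comparison}, in this section $u$ stands for the truncation $T_k(u)\in\lpwwz$ solving \eqref{tweaksol} with $\mu_k\in L^{p'}(0,T;W^{-1,p'}(\OO))$, so $u$, $w$, and hence $v$ all lie in the energy space on $\iq{4r}$ and the Steklov/mollification step is the standard one; the passage from $\mu_k$ back to $\mu$ (via \eqref{est-mu}) is carried out later, in Section~\ref{Estimates on upper-level sets}, not inside Lemma~\ref{compa1}.
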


We point out that Lemma~\ref{compa1} also holds for $p>2-\frac{1}{n+1}$; see \cite[Lemma 4.3]{KM13b} for $2-\frac{1}{n+1} < p \le 2$, and \cite[Lemma 4.1]{KM14b} for $p \ge 2$.

We next introduce a higher integrability result for $Dw$.
\begin{lemma}[See {\cite{KL00,PS22}}]\label{high int}
Let $\frac{2n}{n+2}<p\le2$ and let $\frac{n(2-p)}{2}<\theta\le p$. If $w$ is the weak solution of \eqref{pem2} under \eqref{structure} satisfying
\begin{equation*}\label{hi-c}
\mint{\iq{4r}}{|Dw|^{\theta}}{dxdt} \le c_w \lambda^{\theta}
\end{equation*}
for some constant $c_w\ge1$, then there exist two constants $\sigma=\sigma(n,\La_0,\La_1,p,\theta)>0$ and $c=c(n,\La_0,\La_1,p,\theta,c_w)\ge1$ such that
\begin{equation*}\label{hi-r}
\mint{\iq{2r}}{|Dw|^{p(1+\sigma)}}{dx dt} \le c\lambda^{p(1+\sigma)}.
\end{equation*}
\end{lemma}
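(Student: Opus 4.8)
The plan is to read this as the intrinsic higher-integrability theorem for the homogeneous singular parabolic $p$-Laplacian of Kinnunen--Lewis \cite{KL00} (also used in \cite{PS22}), and to carry it out in three stages: absorbing the intrinsic scaling by a normalization, deriving a reverse H\"{o}lder inequality with a scaling deficit from a Caccioppoli estimate together with parabolic Sobolev--Poincar\'{e}, and removing the deficit by a stopping-time argument on intrinsic cylinders before invoking Gehring's lemma.

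First I would normalize. Put $v(y,\tau):=\lambda^{-p/2}r^{-1}\,w\bigl(x_0+\lambda^{\frac{p-2}{2}}r\,y,\ t_0+r^2\tau\bigr)$ and $\tilde{\mathbf{a}}(\xi,y,\tau):=\lambda^{1-p}\mathbf{a}\bigl(\lambda\xi,\ x_0+\lambda^{\frac{p-2}{2}}r\,y,\ t_0+r^2\tau\bigr)$. A direct computation shows that $\tilde{\mathbf{a}}$ again satisfies \eqref{structure} with the same constants $\La_0,\La_1$, that $v$ is a weak solution of $v_\tau-\ddiv\tilde{\mathbf{a}}(Dv,y,\tau)=0$ on the \emph{standard} cylinder $Q_4$, and that $Dv=\lambda^{-1}Dw$ at corresponding points. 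Hence the hypothesis turns into $\mint{Q_4}{|Dv|^{\theta}}{dz}\le c_w$ and the goal into $\mint{Q_2}{|Dv|^{p(1+\sigma)}}{dz}\le c$; since the constants are insensitive to $r$, one may further take $r=1$.

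Second I would set up the energy and Sobolev machinery on standard subcylinders $Q_{2\rho}\subseteq Q_4$. Testing the (Steklov-averaged) equation with $(v-\bar v_{Q})\varphi^{p}$, where $\varphi$ is a cut-off between concentric cylinders, and using \eqref{structure}, gives a Caccioppoli inequality controlling $\mint{Q_\rho}{|Dv|^{p}}{dz}$ together with $\rho^{-2}\sup_\tau\fint_{B_\rho}|v-\bar v_{B_\rho}|^2\,dy$ by $\mint{Q_{2\rho}}{\bigl(|v-\bar v_{Q_{2\rho}}|/\rho\bigr)^{p}}{dz}$ plus a lower-order deficit term. Feeding this into the parabolic Sobolev--Poincar\'{e} inequality yields a reverse H\"{o}lder inequality of the form $\mint{Q_\rho}{|Dv|^{p}}{dz}\le c\bigl(\mint{Q_{2\rho}}{|Dv|^{q}}{dz}\bigr)^{p/q}+c\,(\text{deficit})$ for some exponent $q$ with $\frac{(2-p)n}{2}<q<p$. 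The admissibility of such a $q$, equivalently the non-emptiness of the interval $\bigl(\frac{(2-p)n}{2},\,p\bigr]$ allowed for $\theta$, holds exactly because $p>\frac{2n}{n+2}$, and this is precisely where the lower bound on $\theta$ and the parabolic Sobolev exponent $\frac{np}{n+2}$ come in.

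Third, and this I expect to be the real obstacle, I would remove the deficit. Since the singular $p$-Laplacian scales differently in space and time, the deficit term obstructs a direct application of Gehring's lemma; following Kinnunen--Lewis, I would fix a level $\mu$ larger than a controlled multiple of $\bigl(\mint{Q_4}{|Dv|^{q}}{dz}\bigr)^{1/q}$, run a stopping-time construction producing a Vitali-type family of \emph{intrinsic} cylinders $Q_{\rho_i}^{\mu}=B_{\mu^{(p-2)/2}\rho_i}\times I_{\rho_i}$ covering $\{|Dv|>\mu\}\cap Q_3$ and satisfying the self-balancing relation $\mint{Q_{\rho_i}^{\mu}}{|Dv|^{p}}{dz}\approx\mu^{p}$; on these intrinsic cylinders the deficit is absorbed and the reverse H\"{o}lder inequality becomes genuinely self-improving. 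Summing the resulting estimates over the cover and using the $L^{q}$ bound on $Q_4$ gives a Calder\'{o}n--Zygmund-type decay estimate for $|\{|Dv|>\mu\}\cap Q_2|$, which a standard iteration (Gehring's lemma, or summation over dyadic levels) upgrades to $Dv\in L^{p(1+\sigma)}(Q_2)$ with some $\sigma=\sigma(n,\La_0,\La_1,p,\theta)>0$ and with the quantitative constant depending in addition on $c_w$. Undoing the normalization of the first step then yields the claimed estimate on $\iq{2r}$.
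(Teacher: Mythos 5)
The paper does not give a proof of this lemma; it is stated with a citation to Kinnunen--Lewis \cite{KL00} and Park--Shin \cite{PS22}, and your proposal is a correct outline of exactly the argument in those references (intrinsic normalization reducing to $\lambda=1$, $r=1$ with the structure constants preserved; Caccioppoli plus parabolic Sobolev--Poincar\'{e} giving a reverse H\"{o}lder with a scaling deficit; stopping time on intrinsic cylinders and a Gehring/good-$\lambda$ iteration to absorb the deficit). The only point worth making explicit is that the lower exponent $q$ in the reverse H\"{o}lder inequality must be chosen in $\bigl(\frac{(2-p)n}{2},\theta\bigr]$ so that the hypothesis $\fint_{Q_4}|Dv|^\theta\le c_w$ actually controls the right-hand side, which is possible precisely because $\theta>\frac{(2-p)n}{2}$.
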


We remark that Lemma~\ref{high int} also holds for $p\ge2$, see \cite{KL00,BPS21}.

If the measure $\mu$ satisfies a Morrey-type condition (see Definition~\ref{mudecay}), then we have the following relation:
\begin{lemma}\label{compa2}
Let $\lambda \ge 1$ and let $\mu \in \mathcal{L}^{1,\kappa}(\Omega_{T})$ for some $1 < \kappa \leq N:=n+2$. Assume that $\gamma$ is given by \eqref{main thm1-r1}. If the relation
\begin{equation}\label{compa2-r1}
\left[\frac{|\mu|(Q_{4r}^{\lambda})}{|Q_{4r}^{\lambda}|}\right]^{\frac{1}{\gamma}} \le \delta \lambda
\end{equation}
holds for some constant $\delta \in (0,1)$, then we have
\begin{equation*}
\left[\frac{|\mu|(Q_{4r}^{\lambda})}{|Q_{4r}^{\lambda}|^{\frac{n+1}{n+2}}}\right]^{\frac{n+2}{p(n+1)-n}}	
\leq c \delta^{\frac{\gamma(\kappa-1)(n+2)}{\kappa(p(n+1)-n)}} \lambda
\end{equation*}
for some constant $c=c(n,p, \kappa, C_0) \geq 1$, where $C_0$ is given by Definition~\ref{mudecay}\,\eqref{mu}.
\end{lemma}

\begin{proof}
For simplicity, we write $\beta:=p(n+1)-n$. We compute
\begin{equation}\label{compa2_00}
\begin{aligned}
	\left[ \frac{|\mu| (Q_{4r}^{\lambda})}{|Q_{4r}^{\lambda}|^{\frac{n+1}{n+2}}}\right]^{\frac{n+2}{\beta}}
	&= \left[ \frac{|\mu| (Q_{4r}^{\lambda})}{|Q_{4r}^{\lambda}|}\right]^{\frac{n+2}{\beta}} |Q_{4r}^{\lambda}|^{\frac{1}{\beta}} \\
	&= \left[ \frac{|\mu| (Q_{4r}^{\lambda})}{|Q_{4r}^{\lambda}|}\right]^{\alpha \frac{n+2}{\beta}}
	\left[ \frac{|\mu| (Q_{4r}^{\lambda})}{|Q_{4r}^{\lambda}|}\right]^{(1-\alpha)\frac{n+2}{\beta}}|Q_{4r}^{\lambda}|^{\frac{1}{\beta}},
\end{aligned}
\end{equation}
where $\alpha \in (0,1)$ is to be determined later.

First, we assume that $1<\kappa\le n$. The intrinsic parabolic cylinder $Q_{4r}^{\lambda}$ can be covered by finitely many (at most $2\lfloor\lambda^{2-p}\rfloor$) standard parabolic cylinders with radius $\lambda^{\frac{p-2}{2}}4r$. Combining this property and Definition~\ref{mudecay}\,\eqref{mu}, we deduce
\begin{equation}\label{compa2_01.1}
\frac{|\mu| (Q_{4r}^{\lambda})}{|Q_{4r}^{\lambda}|} \le c \frac{\lambda^{2-p}(\lambda^{\frac{p-2}{2}}4r)^{N-\kappa}}{\lambda^{\frac{n(p-2)}{2}}(4r)^N} \le c \lambda^{\frac{\kappa(2-p)}{2}}r^{-\kappa}.
\end{equation}
Inserting \eqref{compa2-r1} and \eqref{compa2_01.1} into the right-hand side of \eqref{compa2_00} yields
\begin{equation}\label{compa2_02.1}
\begin{aligned}
	\left[ \frac{|\mu| (Q_{4r}^{\lambda})}{|Q_{4r}^{\lambda}|^{\frac{n+1}{n+2}}}\right]^{\frac{n+2}{\beta}}
	&\leq c (\delta\lambda)^{\frac{\gamma\alpha(n+2)}{\beta}} \gh{\lambda^{\frac{\kappa(2-p)}{2}}r^{-\kappa}}^{\frac{(1-\alpha)(n+2)}{\beta}} \gh{\lambda^{\frac{n(p-2)}{2}}r^{n+2}}^{\frac{1}{\beta}}\\
&\le c \delta^{\frac{\gamma\alpha(n+2)}{\beta}} \lambda^{\frac{\gamma\alpha(n+2)}{\beta}+\frac{(2-p)(\kappa(1-\alpha)(n+2)-n)}{2\beta}} r^{\tilde{\alpha}}
\end{aligned}
\end{equation}
for some constant $c=c(n,p, \kappa, C_0)\ge1$, where $\tilde{\alpha}:=\frac{(n+2)(1-\kappa(1-\alpha))}{\beta}$.

On the other hand, we assume that $n\le \kappa \le N$. Definition~\ref{mudecay}\,\eqref{mu} provides
\begin{equation}\label{compa2_01}
\frac{|\mu| (Q_{4r}^{\lambda})}{|Q_{4r}^{\lambda}|} \le \frac{|\mu| (Q_{4r})}{|Q_{4r}^{\lambda}|} \le c \frac{(4r)^{N-\kappa}}{\lambda^{\frac{n(p-2)}{2}}(4r)^N} \le c \lambda^{\frac{n(2-p)}{2}}r^{-\kappa}.
\end{equation}
Similar to \eqref{compa2_02.1}, we insert \eqref{compa2-r1} and \eqref{compa2_01} into the right-hand side of \eqref{compa2_00}, to discover
\begin{equation}\label{compa2_02}
\left[ \frac{|\mu| (Q_{4r}^{\lambda})}{|Q_{4r}^{\lambda}|^{\frac{n+1}{n+2}}}\right]^{\frac{n+2}{\beta}} \le c \delta^{\frac{\gamma\alpha(n+2)}{\beta}} \lambda^{\frac{\gamma\alpha(n+2)}{\beta}+\frac{n(2-p)((1-\alpha)(n+2)-1)}{2\beta}} r^{\tilde{\alpha}}.
\end{equation}

Now we fix $\alpha \in (0,1)$ such that $\tilde{\alpha} = 0$ ; that is, $\alpha = \frac{\kappa - 1}{\kappa}$.
From such a choice of $\alpha$ and the definition of $\gamma$, both \eqref{compa2_02.1} and \eqref{compa2_02} imply
\begin{equation*}
\left[ \frac{|\mu| (Q_{4r}^{\lambda})}{|Q_{4r}^{\lambda}|^{\frac{n+1}{n+2}}}\right]^{\frac{n+2}{\beta}}
	\leq c \delta^{\frac{\gamma(\kappa-1)(n+2)}{\kappa\beta}} \lambda,
\end{equation*}
which completes the proof.
\end{proof}

If the measure $\mu$ admits a favorable decomposition, we obtain
\begin{lemma}\label{compa3}
Let $\lambda \ge 1$. Assume that the measure $\mu$ has the following decomposition
\begin{equation*}
  \mu = \mu_1 \otimes \mu_2,
\end{equation*}
where $\mu_1 \in L^{\infty}(\OO)$ and $\mu_2 \in \LL^{1,\kappa_2}(0,T)$ for some $\kappa_2 \in (1,2]$. Also, assume that $\gamma_2$ is given by \eqref{def:gamma2}. If the relation
\begin{equation}\label{compa3-r3}
\left[\frac{|\mu|(Q_{4r}^{\lambda})}{|Q_{4r}^{\lambda}|}\right]^{\frac{1}{\gamma_2}} \le \delta \lambda
\end{equation}
holds for some constant $\delta \in (0,1)$, then we have
\begin{equation}\label{compa3-r4}
\left[\frac{|\mu|(Q_{4r}^{\lambda})}{|Q_{4r}^{\lambda}|^{\frac{n+1}{n+2}}}\right]^{\frac{n+2}{p(n+1)-n}}	
\leq c \delta^{\frac{\gamma_2(\kappa_2-1)(n+2)}{\kappa_2(p(n+1)-n)}} \lambda
\end{equation}
for some constant $c=c(n,p, \kappa, C_2, \|\mu_1\|_{L^{\infty}(\OO)}) \geq 1$, where $C_2$ is given by Definition~\ref{mudecay}\,\eqref{mu2}.
\end{lemma}

\begin{proof}
For simplicity, we write $\beta:=p(n+1)-n$. Our assumption implies
\begin{equation}\label{decom2-1}
\frac{|\mu_1|(B_{4r}^{\lambda})}{|B_{4r}^{\lambda}|} \leq \|\mu_1\|_{L^{\infty}(\OO)}
\quad \text{and} \quad
\frac{|\mu_2| (I_{4r})}{|I_{4r}|} \leq c r^{-\kappa_2}.
\end{equation}
Inserting \eqref{compa3-r3} and \eqref{decom2-1} into the right-hand side of \eqref{compa2_00} yields
\begin{equation*}
\begin{aligned}
	\left[ \frac{|\mu| (Q_{4r}^{\lambda})}{|Q_{4r}^{\lambda}|^{\frac{n+1}{n+2}}}\right]^{\frac{n+2}{\beta}}
	&\leq c (\delta\lambda)^{\frac{\gamma_2\alpha(n+2)}{\beta}} r^{\frac{-\kappa_2(1-\alpha)(n+2)}{\beta}} \gh{\lambda^{\frac{n(p-2)}{2}}r^{n+2}}^{\frac{1}{\beta}}\\
&\le c \delta^{\frac{\gamma_2\alpha(n+2)}{\beta}} \lambda^{\frac{\gamma_2\alpha(n+2)}{\beta}+\frac{n(p-2)}{2\beta}} r^{\frac{(n+2)(1-\kappa_2(1-\alpha))}{\beta}}
\end{aligned}
\end{equation*}
for some constant $c=c(n,p, \kappa, C_2, \|\mu_1\|_{L^{\infty}(\OO)})\ge1$. Then we choose $\alpha \in (0,1)$ such that $\frac{(n+2)(1-\kappa_2(1-\alpha))}{\beta} = 0$; that is, $\alpha = \frac{\kappa_2 - 1}{\kappa_2}$.
This and the definition of $\gamma_2$ yield the desired estimate \eqref{compa3-r4}.
\end{proof}

Combining all the previous results, we derive
\begin{proposition}\label{comparison1}
Let $\frac{2n}{n+1}<p\le 2-\frac{1}{n+1}$, $\lambda \ge 1$, $0<\delta<1$, and let $\theta$ be a constant such that $\max\mgh{\frac{n+2}{2(n+1)},\frac{n(2-p)}{2}} < \theta < p-\frac{n}{n+1} \le 1$. Assume that $\gamma$ is given by \eqref{main thm1-r1}. If $u$ and $w$ are weak solutions of \eqref{tweaksol} and \eqref{pem2}, respectively, satisfying \eqref{structure}, $\mu \in \mathcal{L}^{1,\kappa}(\Omega_{T})$ for some $1 < \kappa \leq N$,
\begin{equation*}\label{comparison1-c}
\mint{\iq{4r}}{|Du|^\theta}{dxdt} \le \la^\theta \quad \text{and} \quad \left[\frac{|\mu|(Q_{4r}^{\lambda})}{|Q_{4r}^{\lambda}|}\right]^{\frac{1}{\gamma}} \le \delta\la,
\end{equation*}
then there are constants $\sigma=\sigma(n,\La_0,\La_1,p,\theta)>0$ and $c=c(n,\La_0,\La_1,p,\theta,\kappa,C_0)\ge1$ such that
\begin{equation*}\label{comparison1-r}
\mint{\iq{4r}}{|Du-Dw|^\theta}{dxdt} \leq c \delta^{\sigma_0} \lambda^\theta \quad\text{and}\quad \mint{\iq{2r}}{|Dw|^{p(1+\sigma)}}{dx dt} \le c\lambda^{p(1+\sigma)},
\end{equation*}
where $\sigma_0=\sigma_0(n,p,\theta,\kappa)>0$.
\end{proposition}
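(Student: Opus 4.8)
The plan is to chain together Lemma~\ref{compa1}, Lemma~\ref{compa2} and Lemma~\ref{high int}, using the two intrinsic hypotheses to convert the measure-dependent right-hand sides into powers of $\delta\la$. First I would check the hypotheses of Lemma~\ref{compa1}: the prescribed range of $\theta$ is contained in the open interval $\gh{\frac{n+2}{2(n+1)},\, p-\frac{n}{n+1}}$, and for $n\ge2$ one has $\frac{3n+2}{2n+2}\le\frac{2n}{n+1}<p$, so Lemma~\ref{compa1} applies on $\iq{4r}$ and gives
\begin{equation*}
\gh{\mint{\iq{4r}}{|Du-Dw|^{\theta}}{dxdt}}^{\frac1\theta} \le c\bgh{\frac{|\mu|(\iq{4r})}{|\iq{4r}|^{\frac{n+1}{n+2}}}}^{\frac{n+2}{(n+1)p-n}} + c\bgh{\frac{|\mu|(\iq{4r})}{|\iq{4r}|^{\frac{n+1}{n+2}}}}\gh{\mint{\iq{4r}}{|Du|^{\theta}}{dxdt}}^{\frac{(2-p)(n+1)}{\theta(n+2)}}.
\end{equation*}

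Next, since $1<\kappa\le N$, Lemma~\ref{compa2} together with the density hypothesis $\bgh{|\mu|(\iq{4r})/|\iq{4r}|}^{1/\gamma}\le\delta\la$ yields
\begin{equation*}
\bgh{\frac{|\mu|(\iq{4r})}{|\iq{4r}|^{\frac{n+1}{n+2}}}}^{\frac{n+2}{(n+1)p-n}} \le c\bgh{\frac{|\mu|(\iq{4r})}{|\iq{4r}|}}^{\frac1\gamma} \le c\delta\la,
\end{equation*}
so the first term on the right-hand side above is bounded by $c\delta\la$, and $\bgh{|\mu|(\iq{4r})/|\iq{4r}|^{\frac{n+1}{n+2}}}\le(c\delta\la)^{\frac{(n+1)p-n}{n+2}}$. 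For the second term I would insert the energy hypothesis $\mint{\iq{4r}}{|Du|^{\theta}}{dxdt}\le\la^{\theta}$ and use the arithmetic identity $\frac{(n+1)p-n}{n+2}+\frac{(2-p)(n+1)}{n+2}=1$, so that the second term becomes $(c\delta\la)^{\frac{(n+1)p-n}{n+2}}\la^{\frac{(2-p)(n+1)}{n+2}}=c\delta^{\frac{(n+1)p-n}{n+2}}\la$. Since $p<2$ forces $0<\frac{(n+1)p-n}{n+2}<1$ and $0<\delta<1$, one has $\delta\le\delta^{\frac{(n+1)p-n}{n+2}}$, hence both terms are $\le c\delta^{\frac{(n+1)p-n}{n+2}}\la$; raising to the power $\theta$ gives the first claimed estimate.

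For the higher integrability I would first observe that $\theta\in(0,1)$ gives the pointwise inequality $|Dw|^{\theta}\le|Du|^{\theta}+|Du-Dw|^{\theta}$; integrating over $\iq{4r}$ and invoking the difference estimate just obtained (together with $\delta<1$) yields $\mint{\iq{4r}}{|Dw|^{\theta}}{dxdt}\le c\la^{\theta}$ with a universal constant $c$. The assumed range of $\theta$ also satisfies $\frac{(2-p)n}{2}<\theta\le p$ (since $\theta<p-\frac{n}{n+1}\le p$) and $\frac{2n}{n+2}<\frac{2n}{n+1}<p\le2$, so Lemma~\ref{high int} applies with $c_w$ universal and supplies $\sigma=\sigma(n,\La_0,\La_1,p,\theta)>0$ and $c\ge1$ with $\mint{\iq{2r}}{|Dw|^{p(1+\sigma)}}{dxdt}\le c\la^{p(1+\sigma)}$, which is the second claimed estimate.

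The only step requiring care is the exponent bookkeeping — verifying the identity $\frac{(n+1)p-n}{n+2}+\frac{(2-p)(n+1)}{n+2}=1$, the inequality $\frac{(n+1)p-n}{n+2}<1$, and the fact that the prescribed range of $\theta$ simultaneously meets the requirements of Lemmas~\ref{compa1} and~\ref{high int}. None of this is substantial, so I do not expect a genuine obstacle: the proposition is essentially a repackaging of the three preceding lemmas under the two intrinsic normalizations.
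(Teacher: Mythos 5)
Your proof is correct and is precisely the argument the paper implies: the paper states Proposition~\ref{comparison1} with the phrase ``Combining all the previous results, we derive'' and gives no further detail, and your chain (Lemma~\ref{compa1} for the difference, Lemma~\ref{compa2} to convert the Morrey density into a power of $\delta\la$, the identity $\frac{(n+1)p-n}{n+2}+\frac{(2-p)(n+1)}{n+2}=1$, subadditivity of $t\mapsto t^\theta$ to transfer the $L^\theta$ bound from $Du$ to $Dw$, and finally Lemma~\ref{high int}) is exactly that combination, with the hypothesis checks done correctly.
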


\begin{remark}\label{comparison remark}
When we combine Lemmas~\ref{compa1} and \ref{high int}, the range of $p$ in Proposition \ref{comparison1} is valid when $\frac{2n}{n+1}<p\le 2-\frac{1}{n+1}$, not $\max\mgh{\frac{3n+2}{2n+2},\frac{2n}{n+2}}<p\le 2-\frac{1}{n+1}$, since the exponent $\theta$ in Proposition \ref{comparison1} exists only when $p-\frac{n}{n+1}>\max\mgh{\frac{n+2}{2(n+1)},\frac{(2-p)n}{2}}$. Note that $\frac{2n}{n+1} \ge \max\mgh{\frac{3n+2}{2n+2},\frac{2n}{n+2}}$, where the equality holds if and only if $n=2$.
\end{remark}

To prove Theorem~\ref{main thm4}, we need a more comparison estimate as follows. Assume that the vector field $\mathbf{a}$ satisfies \eqref{str1} and a $(\delta,R_0)$-BMO condition for some $R_0>4r$ and $\delta \in (0,1)$.
We consider the unique weak solution $v$ to the coefficient frozen problem
\begin{equation}
\label{pem3}\left\{
\begin{alignedat}{3}
v_t -\ddiv \bar{\ma}_{B_{2r}^{\lambda}}(Dv,t) &= 0 &&\quad \text{in} \ \iq{2r}, \\
v &= w &&\quad \text{on} \ \partial_p \iq{2r},
\end{alignedat}\right.
\end{equation}
where a freezing operator $\bar{\ma}_{B_{2r}^{\lambda}} = \bar{\ma}_{B_{2r}^{\lambda}}(\xi,t) : \bb^n \times (-4r^2,4r^2) \to \bb^n$ is given by
\begin{equation*}
\bar{\mathbf{a}}_{B_{2r}^{\lambda}}(\xi,t) := \mint{B_{2r}^{\lambda}}{\mathbf{a}(\xi,x,t)}{dx}.
\end{equation*}
Then the operator $\bar{\mathbf{a}}_{B_{2r}^{\lambda}}$ satisfies \eqref{str1}.

Now we derive the following comparison result between \eqref{pem2} and \eqref{pem3}:
\begin{lemma}\label{fcompa1}
Let $p>\frac{2n}{n+2}$. Assume that the vector field $\mathbf{a}$ satisfies \eqref{str1} and a $(\delta,R_0)$-BMO condition for some $R_0>4r$ and $\delta \in (0,1)$. If $w$ and $v$ are weak solutions of \eqref{pem2} and \eqref{pem3}, respectively, then there is a constant
$c=c(n,\La_0,\La_1,p)\ge1$ such that
\begin{equation}\label{fcompa1_r}
\mint{\iq{2r}}{|Dw-Dv|^{p}}{dx dt} \le c \delta^{\sigma_1} \la^p  \quad \text{and} \quad \Norm{Dv}_{L^{\infty}(\iq{r})} \le c\lambda,
\end{equation}
where $\sigma_1=\sigma_1(n,\La_0,\La_1,p)>0$.
\end{lemma}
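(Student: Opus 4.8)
The plan is to establish \eqref{fcompa1_r} in two independent pieces: first the Lipschitz bound $\Norm{Dv}_{L^\infty(\iq{r})} \le c\lambda$, which is a self-contained regularity statement for the coefficient-frozen problem \eqref{pem3}, and then the comparison estimate $\mint{\iq{2r}}{|Dw-Dv|^p}{dxdt} \le c\delta^{\sigma_1}\lambda^p$, which uses the BMO-closeness of $\ma$ to $\bar{\ma}_{B_{2r}^\lambda}$ together with the higher integrability of $Dw$ already recorded in Lemma~\ref{high int}. For the first piece, I would invoke the standard interior gradient regularity theory for singular/degenerate parabolic systems with $x$-independent, $C^1$-regular vector fields satisfying \eqref{str1} (DiBenedetto-type estimates, see \cite{DiB93,Urb08}): since $\bar{\ma}_{B_{2r}^\lambda}(\xi,t)$ inherits \eqref{str1}, the solution $v$ of the homogeneous equation on $\iq{2r}$ has locally bounded spatial gradient, and the intrinsic scaling built into $\iq{2r}$ (recall $\frac{s}{r^2} = \lambda^{2-p}$) is precisely what converts the natural energy bound $\mint{\iq{2r}}{|Dv|^\theta}{dxdt} \lesssim \lambda^\theta$ — itself a consequence of comparing $v$ with $w$ and using $\mint{\iq{2r}}{|Dw|^\theta}{dxdt} \lesssim \lambda^\theta$ — into the sup bound $\Norm{Dv}_{L^\infty(\iq{r})} \le c\lambda$.

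For the comparison piece I would test the weak formulations of \eqref{pem2} and \eqref{pem3} with $\varphi = w - v$ (extended by zero, since $w = v$ on $\partial_p \iq{2r}$), subtract, and use the ellipticity in $\eqref{str1}_2$ in the form of the standard monotonicity inequality for the $p$-Laplace vector field. This produces, on the left, a quantity controlling $\int_{\iq{2r}} (|Dw|^2+|Dv|^2)^{\frac{p-2}{2}}|Dw-Dv|^2\,dxdt$, and on the right the "BMO error" term $\int_{\iq{2r}} |\ma(Dw,x,t) - \bar{\ma}_{B_{2r}^\lambda}(Dw,t)|\,|Dw-Dv|\,dxdt$, which is bounded by $\int_{\iq{2r}} \Theta(\ma,B_{2r}^\lambda)(x,t)\,|Dw|^{p-1}\,|Dw-Dv|\,dxdt$. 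I would then handle the degeneracy in the standard singular-range way: split into the regions $\{|Dw|+|Dv| \le \lambda\}$ and its complement, or more cleanly use Young's inequality with the weight $(|Dw|^2+|Dv|^2)^{\frac{p-2}{2}}$ to absorb the $|Dw-Dv|$ factor, reducing matters to estimating $\int_{\iq{2r}} \Theta(\ma,B_{2r}^\lambda)\,(|Dw|^2+|Dv|^2)^{\frac{2-p}{2}}|Dw|^{2(p-1)}\,dxdt$ and then, via Hölder with a small exponent tuned to the higher-integrability gain $\sigma$ from Lemma~\ref{high int}, bounding it by $\|\Theta\|$-averages (controlled by $\delta$ through \eqref{2-small BMO2}) times powers of $\mint{\iq{2r}}{|Dw|^{p(1+\sigma)}}{dxdt} \le c\lambda^{p(1+\sigma)}$ and of $\mint{\iq{2r}}{|Dv|^\theta}{dxdt} \lesssim \lambda^\theta$. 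Tracking the exponents yields the gain $\delta^{\sigma_1}$ with $\sigma_1$ depending only on $n,\La_0,\La_1,p$ through the Hölder splitting and $\sigma$.

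The main obstacle I expect is the singular range $p < 2$ in the comparison step: the monotonicity inequality degenerates where $|Dw|+|Dv|$ is large, so one cannot directly bound $|Dw-Dv|^p$ by the left-hand energy without paying a factor involving $(|Dw|^2+|Dv|^2)^{\frac{2-p}{2}}$, and controlling that factor is exactly where the higher integrability of $Dw$ (and the $L^\infty$ bound on $Dv$, or at least its $L^\theta$ control at the intrinsic scale) must be brought in quantitatively. Getting the bookkeeping of exponents right — so that the powers of $\lambda$ collapse to $\lambda^p$ while the powers of $\delta$ add up to a genuinely positive $\sigma_1$ — is the delicate part; one must ensure the Hölder exponent conjugate to the $\Theta$-factor stays below $1+\sigma$ and that the condition $p > \frac{2n}{n+2}$ is what makes the relevant Sobolev-type embedding and the exponent $\frac{2-p}{2}$ admissible. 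Once the $L^\infty$ estimate for $Dv$ is in hand, however, this factor can be taken out as a constant multiple of $\lambda^{2-p}$ on the region where $|Dv|$ dominates and absorbed elsewhere, which streamlines the argument considerably; I would present the proof in that order, deriving the Lipschitz bound first and then feeding it into the comparison estimate.
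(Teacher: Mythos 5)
Your plan is essentially the argument the paper cites rather than spells out: the paper's proof of this lemma consists of pointing to Lemma~\ref{high int}, the BMO condition \eqref{2-small BMO2}, and \cite[Lemma 3.10]{BOR13} for the comparison estimate (which is precisely the test-with-$w-v$, monotonicity, Young-with-weight, and H\"older-against-higher-integrability scheme you describe), and to interior gradient regularity \cite{DF85,DF85b,DiB93} for the sup bound, so your write-up is the content of those citations unpacked. The one thing you should not do is the reordering you propose at the end. The $L^\infty$ bound for $Dv$ is an \emph{interior} estimate: it holds on $Q_r^\lambda$ and cannot reach $\partial_p Q_{2r}^\lambda$, whereas the comparison estimate integrates over all of $Q_{2r}^\lambda$. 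You therefore cannot substitute $\Norm{Dv}_{L^\infty(Q_r^\lambda)}\le c\lambda$ into the error term on $Q_{2r}^\lambda$ to absorb the factor $(|Dw|^2+|Dv|^2)^{\frac{2-p}{2}}$. Stick with the route you sketched first: control $Dv$ on $Q_{2r}^\lambda$ via the energy/comparison bound inherited from $Dw$ (which follows from testing and the structure conditions, since $v=w$ on $\partial_p Q_{2r}^\lambda$), then H\"older with an exponent tuned to the gain $\sigma$ of Lemma~\ref{high int}. That is what closes, and it is also what supplies the a priori integrability of $Dv$ needed to launch the interior $L^\infty$ theory and obtain the sup bound on the smaller cylinder afterwards.
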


\begin{proof}
The first estimate of \eqref{fcompa1_r} follows from Lemma \ref{high int}, \eqref{2-small BMO2}, and \cite[Lemma 3.10]{BOR13}. For interior regularity results (see \cite{DF85, DF85b, DiB93}),  the second estimate of \eqref{fcompa1_r} holds.
\end{proof}

Finally, we combine Lemmas~\ref{compa1}, \ref{high int}, \ref{compa2} and \ref{fcompa1} to obtain the following regularity estimate:
\begin{proposition}\label{comparison2}
Let $\frac{2n}{n+1}<p\le 2-\frac{1}{n+1}$, $\lambda \ge 1$, and let $\theta$ be a constant such that $\max\mgh{\frac{n+2}{2(n+1)},\frac{n(2-p)}{2}} < \theta < p-\frac{n}{n+1} \le 1$. Assume that the vector field $\mathbf{a}$ satisfies \eqref{str1} and a $(\delta,R_0)$-BMO condition for some $R_0>4r$ and $\delta \in (0,1)$. If $u$, $w$ and $v$ are weak solutions of \eqref{tweaksol}, \eqref{pem2} and \eqref{pem3}, respectively, satisfying $\mu \in \mathcal{L}^{1,\kappa}(\Omega_{T})$ for some $1 < \kappa \leq N$,
\begin{equation*}\label{comparison2-c}
\mint{\iq{4r}}{|Du|^\theta}{dxdt} \le \la^\theta \quad \text{and} \quad \left[\frac{|\mu|(Q_{4r}^{\lambda})}{|Q_{4r}^{\lambda}|}\right]^{\frac{1}{\gamma}} \le \delta\la,
\end{equation*}
where $\gamma$ is given by \eqref{main thm1-r1}, then there is a constant $c_0=c_0(n,\La_0,\La_1,p,\theta,\kappa,C_0)\ge1$ such that
\begin{equation*}\label{comparison2-r}
\mint{\iq{2r}}{|Du-Dv|^\theta}{dxdt} \leq c_0 \delta^{\sigma_2} \lambda^\theta \quad\text{and}\quad \Norm{Dv}_{L^{\infty}(\iq{r})} \le c_0\lambda,
\end{equation*}
where $\sigma_2=\sigma_2(n,\La_0,\La_1,p,\theta,\kappa)>0$.
\end{proposition}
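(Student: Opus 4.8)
The plan is to chain the three comparison steps $u\to w\to v$, keeping careful track of the exponent of $\delta$ and of the integrability exponents; no analytic input beyond Lemmas~\ref{compa1}, \ref{high int}, \ref{compa2} and \ref{fcompa1} is needed. First I would treat the passage $u\to w$. The parameter restrictions in the statement are exactly those required by Lemmas~\ref{compa1} and \ref{high int} --- recall that $n\ge2$ forces $\frac{2n}{n+1}>\frac{3n+2}{2n+2}$, and that $\frac{(2-p)n}{2}<\theta<p-\frac{n}{n+1}<p\le2$ --- so Proposition~\ref{comparison1} (which packages Lemmas~\ref{compa1}, \ref{high int} and \ref{compa2}) applies under the hypotheses $\mint{\iq{4r}}{|Du|^\theta}{dxdt}\le\la^\theta$ and $\bgh{\frac{|\mu|(\iq{4r})}{|\iq{4r}|}}^{\frac{1}{\gamma}}\le\delta\la$. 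It yields
\[
\mint{\iq{4r}}{|Du-Dw|^\theta}{dxdt}\le c\,\delta^{\theta\frac{(n+1)p-n}{n+2}}\la^\theta
\qquad\text{and}\qquad
\mint{\iq{2r}}{|Dw|^{p(1+\sigma)}}{dxdt}\le c\,\la^{p(1+\sigma)}.
\]

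From the second bound, Jensen's inequality (valid since $\theta<p(1+\sigma)$) together with $\iq{2r}\subset\iq{4r}$ gives the intrinsic energy bound $\mint{\iq{2r}}{|Dw|^{\theta}}{dxdt}\le c\,\la^\theta$; this is precisely the ingredient that must be supplied in order to run the higher-integrability step inside Lemma~\ref{fcompa1}. Then, since $\mathbf{a}$ obeys \eqref{str1} and the $(\delta,R_0)$-BMO condition with $R_0>4r$, and the freezing operator $\bar{\mathbf{a}}_{B_{2r}^{\lambda}}$ again obeys \eqref{str1}, Lemma~\ref{fcompa1} --- fed with the intrinsic bound for $Dw$ just obtained --- yields
\[
\mint{\iq{2r}}{|Dw-Dv|^{p}}{dxdt}\le c\,\delta^{\sigma_1}\la^{p}
\qquad\text{and}\qquad
\Norm{Dv}_{L^{\infty}(\iq{r})}\le c\,\la ,
\]
with $\sigma_1=\sigma_1(n,\La_0,\La_1,p)>0$; the second inequality is already the desired $L^{\infty}$ estimate.

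Finally I would assemble. Because $\theta<1<p$, Hölder's inequality on $\iq{2r}$ downgrades the $L^p$ difference estimate to $\mint{\iq{2r}}{|Dw-Dv|^\theta}{dxdt}\le c\,\delta^{\sigma_1\theta/p}\la^\theta$, while the $u$--$w$ estimate localizes from $\iq{4r}$ to $\iq{2r}$ at the cost of the dimensional factor $|\iq{4r}|/|\iq{2r}|=2^{n+2}$. Adding the two --- using the subadditivity $|a+b|^\theta\le|a|^\theta+|b|^\theta$ valid for $\theta\le1$ --- and setting
\[
\sigma_0:=\min\mgh{\theta\frac{(n+1)p-n}{n+2},\ \frac{\sigma_1\theta}{p}}>0 ,
\]
the triangle inequality $|Du-Dv|\le|Du-Dw|+|Dw-Dv|$ and $0<\delta<1$ give $\mint{\iq{2r}}{|Du-Dv|^\theta}{dxdt}\le c_0\,\delta^{\sigma_0}\la^\theta$ with $c_0=c_0(n,\La_0,\La_1,p,\theta,\kappa,C_0)\ge1$, completing both claims.

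There is no genuine obstacle here --- the statement is a gluing of the earlier results --- but two bookkeeping points need care: first, producing the intrinsic integrability bound for $Dw$ required by Lemma~\ref{fcompa1} (which is not literally among its stated hypotheses and must be read off from the higher-integrability half of Proposition~\ref{comparison1}); and second, honestly tracking the power of $\delta$ through the passage from the $L^p$ estimate for $Dw-Dv$ to the $L^\theta$ one, since it is this step, together with $\sigma_1$, that determines $\sigma_0$ and pins down its dependence on only $n,\La_0,\La_1,p,\theta$.
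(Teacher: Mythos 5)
Your proposal is correct and it is essentially the paper's own proof: the paper supplies no detailed argument for Proposition~\ref{comparison2}, saying only that it follows by combining Lemmas~\ref{compa1}, \ref{high int}, \ref{compa2} and \ref{fcompa1}, and you have fleshed out that combination with the right bookkeeping (Proposition~\ref{comparison1} for $u\to w$, Lemma~\ref{fcompa1} for $w\to v$, H\"older to convert $L^p$ to $L^\theta$ at the cost $\delta^{\sigma_1}\mapsto\delta^{\sigma_1\theta/p}$, and $(a+b)^\theta\le a^\theta+b^\theta$ to glue).

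One small bookkeeping remark: the normalization of $Dw$ that Lemma~\ref{fcompa1} implicitly needs is most naturally the $L^\theta$ bound on $\iq{4r}$ (that is what Lemma~\ref{high int} is fed), and this comes directly from the hypothesis $\mint{\iq{4r}}{|Du|^\theta}{dxdt}\le\la^\theta$, the first estimate of Proposition~\ref{comparison1}, and the subadditivity $(a+b)^\theta\le a^\theta+b^\theta$, without invoking Jensen; your Jensen argument produces the bound only on the smaller cylinder $\iq{2r}$, and the parenthetical ``$\iq{2r}\subset\iq{4r}$'' does not upgrade it to $\iq{4r}$. This does not affect the validity of the conclusion since the triangle-inequality route is immediate, but it is worth stating the normalization on the correct cylinder.
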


\begin{remark}\label{comparison1-remark}
In the case $\mu = \mu_1 \otimes \mu_2$, where $\mu_1 \in L^{\infty}(\OO)$ and $\mu_2 \in \LL^{1,\kappa_2}(0,T)$ for some $\kappa_2 \in (1,2]$, Propositions~\ref{comparison1} and \ref{comparison2} also hold with $\gamma_2$ replacing $\gamma$, by using Lemma~\ref{compa3} instead of Lemma~\ref{compa2}.
\end{remark}

\section{Proofs of main results}\label{la covering arguments}

In this section, we derive Marcinkiewicz estimates (Theorems~\ref{main thm1}, \ref{main thm3} and \ref{main thm4}) for spatial gradient of a renormalized solution $u$ of the problem \eqref{pem1}. For this, we employ a so-called {\em stopping-time argument} introduced in \cite{AM07}, to obtain decay estimates on the upper-level set of $|Du|$.

We consider a renormalized solution $u$ of \eqref{pem1}. We denote by $u_k:=T_k(u)$ ($k \in \mathbb{N}$) the truncation of $u$ and $\mu_k \in L^{p'}(0,T; W^{-1,p'}(\OO))$ the corresponding measure given in \eqref{tweaksol}.
We also denote by $w_k$ and $v_k$ the corresponding weak solutions of \eqref{pem2} and \eqref{pem3}, respectively.
We know that $\mu_k=\mu_a^+-\mu_a^-+\nu_k^+-\nu_k^-$ for $k\in \mathbb{N}$. Since $\mu_a^\pm+\nu_k^\pm \to \mu_a^\pm+\mu_s^\pm$ tightly as $k \to \infty$, we have 
\begin{equation}\label{est-mu}
\limsup_{k \to \infty} |\mu_k|(K) \leq |\mu|(K) \quad \text{for every compact subset}\  K\subset \OO_T.
\end{equation}

Let $\frac{2n}{n+1} <p \le 2-\frac{1}{n+1}$, let $\theta$ be a constant such that \eqref{ka-range} holds, and take $Q_{2R} \equiv Q_{2R}(y_0,\tau_0) \Subset \OO_T$. Assume that $\mu \in \mathcal{L}^{1,\kappa}(\OO_T)$ for some $\kappa \in (1,N]$, where $N:=n+2$.
We consider a parameter $\lambda_0$ to be defined, such that
\begin{equation}\label{la0}
  \lambda_0^{\frac{1}{d}} := \gh{\mint{Q_{2R}}{|Du|^{\theta}}{dxdt}}^{\frac{1}{\theta}} + \frac{1}{\delta} \bgh{\frac{|\mu|(Q_{2R})}{|Q_{2R}|}}^{\frac{1}{\gamma}} + 1,
\end{equation}
where the constant $d$ is given by \eqref{scaling deficit} and $\gamma$ is given by \eqref{main thm1-r1}.  The number $\delta \in (0,1)$ will be determined later as a universal constant depending only on $n$, $\La_0$, $\La_1$, $p$, $\theta$, $\kappa$ and $C_0$.

\subsection{Stopping-time arguments}\label{Stopping time argument}

For $\varLambda > \la_0$ and $r \in (0,2R]$, we define
\begin{equation*}\label{up:E}
  E(r,\varLambda):=\mgh{z\in Q_{r} : |Du(z)|>\varLambda}.
\end{equation*}
For fixed radii $R \le R_1 < R_2 \le 2R$, the relation $Q_{r}^{\lambda}(z_0) \subset Q_{R_2} \subset Q_{2R}$
holds whenever $z_0 \in Q_{R_1}$, $r \in (0,R_2-R_1]$ and $\lambda \in [\la_0,\infty)$. Fix
$
z_0 \in E(R_1,4\lambda).
$
For almost every such point, Lebesgue's differentiation theorem implies
\begin{equation}\label{stop1}
  \lim_{s\searrow0} \bgh{\gh{\mint{Q_s^{\lambda}(z_0)}{|Du|^{\theta}}{dxdt}}^{\frac{1}{\theta}} +\frac{1}{\delta}\bgh{\frac{|\mu|(\pc{Q_s^{\lambda}(z_0)})}{|Q_s^{\lambda}(z_0)|}}^{\frac{1}{\gamma}}} \ge \abs{Du(z_0)} > 4\lambda,
\end{equation}
where the symbol $\pc{Q}$ denotes the parabolic closure of $Q$ defined as $\pc{Q} := Q \cup \partial_p Q$.
We consider
\begin{equation}\label{stop2}
  \la > B\la_0, \ \ \text{where} \ B:=\gh{\frac{320R}{R_2-R_1}}^{\frac{dN}{\theta}} > 1.
\end{equation}
For any radius $s$ with
\begin{equation}\label{stop3}
  \frac{R_2-R_1}{160} \le s \le \frac{R_2-R_1}{2},
\end{equation}
we see from \eqref{la0}, \eqref{stop2}, \eqref{stop3} and \eqref{scaling deficit} that
\begin{equation}\label{stop4}
  \begin{aligned}
  &\gh{\mint{Q_s^{\lambda}(z_0)}{|Du|^{\theta}}{dxdt}}^{\frac{1}{\theta}} +\frac{1}{\delta}\bgh{\frac{|\mu|(\pc{Q_s^{\lambda}(z_0)})}{|Q_s^{\lambda}(z_0)|}}^{\frac{1}{\gamma}}\\ &\qquad \le \gh{\frac{|Q_{2R}|}{|Q_s^{\lambda}|}}^{\frac{1}{\theta}} \gh{\mint{Q_{2R}}{|Du|^{\theta}}{dxdt}}^{\frac{1}{\theta}} + \frac{1}{\delta} \gh{\frac{|Q_{2R}|}{|Q_s^{\lambda}|}}^{\frac{1}{\gamma}} \bgh{\frac{|\mu|(Q_{2R})}{|Q_{2R}|}}^{\frac{1}{\gamma}}\\
  &\qquad \le \gh{\frac{2R}{s}}^{\frac{N}{\theta}} \lambda^{\frac{n(2-p)}{2\theta}}\la_0^{\frac{1}{d}}
  \le \gh{\frac{320R}{R_2-R_1}}^{\frac{N}{\theta}}\la_0^{\frac{1}{d}} \la^{\frac{n(2-p)}{2\theta}}\\
  &\qquad = (B\la_0)^{\frac{1}{d}} \la^{\frac{n(2-p)}{2\theta}} < \lambda < 4\lambda,
  \end{aligned}
\end{equation}
where we used the inequality $\theta<\gamma$, see \eqref{ka-range} and  Remark~\ref{main rmk1}\,\eqref{R:m2}.
According to \eqref{stop1}, \eqref{stop4} and the (absolute) continuity of the integral and the measure, there exists a maximal radius $r_{z_0} \in \gh{0,\frac{R_2-R_1}{160}}$ such that
\begin{equation}\label{stop5}
  \gh{\mint{Q_{r_{z_0}}^{\lambda}(z_0)}{|Du|^{\theta}}{dxdt}}^{\frac{1}{\theta}} + \frac{1}{\delta} \bgh{\frac{|\mu|(\pc{Q_{r_{z_0}}^{\lambda}(z_0)})}{|Q_{r_{z_0}}^{\lambda}(z_0)|}}^{\frac{1}{\gamma}} = 4\lambda
\end{equation}
and
\begin{equation}\label{stop6}
  \gh{\mint{Q_s^{\lambda}(z_0)}{|Du|^{\theta}}{dxdt}}^{\frac{1}{\theta}} + \frac{1}{\delta} \bgh{\frac{|\mu|(\pc{Q_s^{\lambda}(z_0)})}{|Q_s^{\lambda}(z_0)|}}^{\frac{1}{\gamma}} < 4\lambda \ \  \text{for any} \ s \in \left(r_{z_0},\frac{R_2-R_1}{2}\right].
\end{equation}

\subsection{Decay estimates}\label{Estimates on upper-level sets}

The goal of this subsection is to derive a decay estimate on an upper-level set of $|Du|$, see \eqref{est12} below.
Let $z_0 \in E(R_1,4\lambda)$, let $r_{z_0} \in \gh{0,\frac{R_2-R_1}{160}}$ be a maximal radius as in \eqref{stop5}. For $\la>B\la_0$, the upper-level set $E(R_1,4\lambda)$ can be covered by a family $\mathcal{F} \equiv \mgh{Q_{4r_{z_0}}^{\lambda}(z_0)}_{z_0 \in E(R_1,4\lambda)}$.  By the standard Vitali covering lemma (see e.g. \cite[Theorem C.1]{Bog07} or \cite[Theorem 1.24]{EG15}), there exists a countable subfamily $\mgh{Q_{4r_{z_i}}^{\lambda}(z_i)}_{i\in \mathbb{N}} \subset \mathcal{F}$ consisting of pairwise disjoint cylinders such that
\begin{equation*}\label{est1}
  E(R_1,4\lambda) \setminus \mathcal{N} \subset \bigcup_{i=1}^\infty Q_{20r_{z_i}}^{\lambda}(z_i) \subset Q_{R_2},
\end{equation*}
where $\mathcal{N}$ is a Lebesgue measure zero set; that is, $|\mathcal{N}|=0$. For simplicity, we denote
\begin{gather*}
  Q_i^0 := Q_{r_{z_i}}^{\lambda}(z_i), \ Q_i^1 := Q_{4r_{z_i}}^{\lambda}(z_i), \ Q_i^2 := Q_{20r_{z_i}}^{\lambda}(z_i),\\
  Q_i^3 := Q_{40r_{z_i}}^{\lambda}(z_i), \  \text{and} \  Q_i^4 := Q_{80r_{z_i}}^{\lambda}(z_i).
\end{gather*}
Note that since $160r_{z_i} < R_2-R_1 \le R$, we have $Q_i^4 \subset Q_{R_2} \subset Q_{2R}$.

We now fix $H\ge4$ to be chosen later and we estimate
\begin{equation}\label{est2}
  |E(R_1,H\lambda)| \le \sum_{i=1}^\infty \abs{Q_i^2 \cap E(R_2,H\lambda)}.
\end{equation}
We first split into
\begin{equation}\label{est3}
\begin{aligned}
  \abs{Q_i^2 \cap E(R_2,H\lambda)} &= \abs{\mgh{z\in Q_i^2 : |Du|>H\lambda}}\\
  &\le \abs{\mgh{z\in Q_i^2 : |Du-Du_{k}|>\frac{H\lambda}{3}}}\\
  &\qquad + \abs{\mgh{z\in Q_i^2 : |Du_{k}-Dw_{k,i}|>\frac{H\lambda}{3}}}\\
  &\qquad + \abs{\mgh{z\in Q_i^2 : |Dw_{k,i}|>\frac{H\lambda}{3}}}\\
  &=: I_1 + I_2 + I_3,
\end{aligned}
\end{equation}
where $w_{k,i}$ is the weak solution of the Cauchy-Dirichlet problem
\begin{equation}
\label{E:w_i}\left\{
\begin{alignedat}{3}
\partial_{t}w_{k,i} - \ddiv\mathbf{a}(Dw_{k,i},x,t) &= 0 &&\quad \text{in} \ Q_i^4, \\
w_{k,i} &= u_k &&\quad \text{on} \ \partial_p Q_i^4.
\end{alignedat}\right.
\end{equation}
From the absolute continuity of the Lebesgue integral and \eqref{stop6}, for each $\varepsilon \in (0,1)$ we have
\begin{equation}\label{est3.5}
\begin{aligned}
  I_1 &\le \frac{3^{\theta}|Q_i^2|}{(H\lambda)^{\theta}}\mint{Q_i^2}{|Du-Du_{k}|^{\theta}}{dxdt} = \frac{3^{\theta}|Q_i^2|}{(H\lambda)^{\theta}}\mint{Q_i^2}{\chi_{\{|Du|>k\}}|Du|^{\theta}}{dxdt}\\ &\le \frac{c\varepsilon}{H^{\theta}} |Q_i^2|
\end{aligned}
\end{equation}
for $k$ large enough. Moreover, applying Proposition~\ref{comparison1} with $r=20r_{z_i}$ and using \eqref{est-mu} and \eqref{stop6}, we deduce
\begin{equation}\label{est4}
  I_2 \le \frac{3^{\theta}}{(H\lambda)^{\theta}}\integral{Q_i^2}{|Du_k-Dw_{k,i}|^{\theta}}{dxdt}
  \le \frac{c\delta^{\theta\sigma_0}}{H^{\theta}} |Q_i^2|,
\end{equation}
where $\sigma_0=\sigma_0(n,p,\theta,\kappa)>0$, and
\begin{equation}\label{est5.95}
\begin{aligned}
  I_3 \le \gh{\frac{3}{H\la}}^{p(1+\sigma)} \integral{Q_i^2}{|Dw_{k,i}|^{p(1+\sigma)}}{dxdt} \le \frac{c}{H^{p(1+\sigma)}} |Q_i^2|
\end{aligned}
\end{equation}
for some two constants $c=c(n,\La_0,\La_1,p,\theta,\kappa,C_0)\ge1$.

Plugging \eqref{est3.5}--\eqref{est5.95} into \eqref{est3}, we obtain
\begin{equation}\label{est5.97}
  \abs{Q_i^2 \cap E(R_2,H\lambda)} \le \gh{\frac{c\varepsilon}{H^{\theta}} + \frac{c\delta^{\theta\sigma_0}}{H^{\theta}} + \frac{c}{H^{p(1+\sigma)}}} |Q_i^2|.
\end{equation}
Now we will estimate $|Q_i^2|$. Recalling \eqref{stop5}, we have then either
\begin{equation}\label{est6}
  2\lambda \le \gh{\mint{Q_i^0}{|Du|^{\theta}}{dxdt}}^{\frac{1}{\theta}} \quad  \text{or} \quad 2\lambda \le \frac{1}{\delta} \bgh{\frac{|\mu|(\pc{Q_i^0})}{|Q_i^0|}}^{\frac{1}{\gamma}}.
\end{equation}
We assume that the first case of \eqref{est6} holds. Then it follows
\begin{equation}\label{est7}
\begin{aligned}
  (2^\theta -1) \la^{\theta} &\le \frac{1}{|Q_i^0|}\integral{Q_i^0 \cap \mgh{|Du|>\lambda}}{|Du|^{\theta}}{dxdt}\\
  &\le \gh{\mint{Q_i^0}{|Du|^{\tilde{\theta}}}{dxdt}}^{\frac{\theta}{\tilde{\theta}}} \gh{\frac{\abs{Q_i^0 \cap \mgh{|Du|>\lambda}}}{\abs{Q_i^0}}}^{1-\frac{\theta}{\tilde{\theta}}}
\end{aligned}
\end{equation}
for any $\tilde{\theta} \in \gh{\theta,p-\frac{n}{n+1}}$.
Applying Proposition~\ref{comparison1} with $\tilde{\theta}$ instead of $\theta$ and utilizing \eqref{est-mu} and \eqref{stop6}, we deduce
\begin{equation*}
\begin{aligned}
  \mint{Q_i^0}{|Du|^{\tilde{\theta}}}{dxdt} &\le \mint{Q_i^0}{|Du-Du_k|^{\tilde{\theta}}}{dxdt} +  \mint{Q_i^0}{|Du_k-Dw_{k,i}|^{\tilde{\theta}}}{dxdt}\\
  &\qquad + \mint{Q_i^0}{|Dw_{k,i}|^{\tilde{\theta}}}{dxdt}\\
  &\le c \la^{\tilde{\theta}}
\end{aligned}
\end{equation*}
for $k$ large enough. Inserting this estimate into \eqref{est7}, we obtain
\begin{equation}\label{est8}
  \abs{Q_i^0} \le c \abs{Q_i^0 \cap \mgh{|Du|>\lambda}}.
\end{equation}
If the second case of \eqref{est6} holds, then we see
\begin{equation}\label{est9}
|Q_i^0| \le \frac{|\mu|(\pc{Q_i^0})}{\gh{2\delta \lambda}^\gamma}.
\end{equation}
Assertions \eqref{est8} and \eqref{est9} yield
\begin{equation}\label{est10}
  \abs{Q_i^2} = 20^N \abs{Q_i^0} \le c \abs{Q_i^1 \cap E\gh{R_2,\lambda}} + \frac{c|\mu|(\pc{Q_i^0})}{\gh{\delta \lambda}^\gamma}.
\end{equation}
We combine \eqref{est5.97} and \eqref{est10} to obtain
\begin{equation}\label{est11}
  \abs{Q_i^2 \cap E(R_2,H\lambda)} \le \gh{\frac{c\varepsilon}{H^{\theta}} + \frac{c\delta^{\theta\sigma_0}}{H^{\theta}} + \frac{c}{H^{p(1+\sigma)}}} \abs{Q_i^1 \cap E\gh{R_2,\lambda}} + \frac{c|\mu|(Q_i^1)}{\gh{\delta \lambda}^\gamma}.
\end{equation}

Since the cylinders $\mgh{Q_i^1}$ are pairwise disjoint, we have from \eqref{est2} and \eqref{est11} that
\begin{equation}\label{est12}
  \abs{E(R_1,H\lambda)} \le \gh{\frac{c\varepsilon}{H^{\theta}} + \frac{c\delta^{\theta\sigma_0}}{H^{\theta}} + \frac{c}{H^{p(1+\sigma)}}} \abs{E\gh{R_2,\lambda}} + \frac{c|\mu|(Q_{2R})}{\gh{\delta \lambda}^\gamma}
\end{equation}
for some constant $c=c(n,\La_0,\La_1,p,\theta,\kappa,C_0)\ge1$.

\subsection{Marcinkiewicz estimates}\label{Marcinkiewicz estimates}
Before Theorems~\ref{main thm1}, \ref{main thm3} and \ref{main thm4}, we introduce the following technical assertion:
\begin{lemma}[See {\cite[Lemma 6.1]{Giu03}}]\label{L:Aux01}
Let $\phi:[r,\rho] \to \mathbb{R}_{\geq 0}$ be a nonnegative bounded function. Assume that for $r\leq t < s \leq \rho$ we have
\[
\phi(t) \leq \vartheta \phi(s) + A (s - t)^{-\beta} + C
\]	
with $0\le \vartheta <1$, $A, C \geq 0$, and $\beta >0$. Then there holds
\[
\phi(r) \leq c(\beta,\vartheta) \left[A (\rho - r)^{-\beta} + C\right].
\]
\end{lemma}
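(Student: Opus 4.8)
The plan is to run the classical iteration argument (sometimes called ``hole-filling''): one builds a geometrically increasing sequence of radii on which the defect inequality is applied repeatedly, choosing the mesh ratio so that the accumulated contraction factors form a convergent geometric series, and then lets the number of steps tend to infinity.

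First I would fix $\tau \in (0,1)$ with $\tau^{\beta} > \vartheta$; such a $\tau$ exists precisely because $\vartheta < 1$ (for instance $\tau := \bigl(\frac{1+\vartheta}{2}\bigr)^{1/\beta}$ when $\vartheta>0$, and any $\tau\in(0,1)$ when $\vartheta=0$), and it depends only on $\beta$ and $\vartheta$. Then set $t_0 := r$ and $t_{i+1} := t_i + (1-\tau)\tau^{i}(\rho - r)$ for $i\ge 0$. A telescoping computation gives $t_i = r + (1-\tau^{i})(\rho - r)$, so that $t_i \nearrow \rho$, each $t_i \in [r,\rho]$, and $t_{i+1}-t_i = (1-\tau)\tau^{i}(\rho-r)$. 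Applying the hypothesis with $t = t_i$ and $s = t_{i+1}$ yields
\[
\phi(t_i) \le \vartheta\,\phi(t_{i+1}) + A(1-\tau)^{-\beta}\tau^{-i\beta}(\rho-r)^{-\beta} + C ,
\]
and iterating this bound $k$ times, starting from $i=0$, gives
\[
\phi(r) \le \vartheta^{k}\phi(t_k) + A(1-\tau)^{-\beta}(\rho-r)^{-\beta}\sum_{i=0}^{k-1}(\vartheta\tau^{-\beta})^{i} + C\sum_{i=0}^{k-1}\vartheta^{i} .
\]
Since $\vartheta\tau^{-\beta} < 1$ and $\vartheta < 1$, both sums are dominated by convergent geometric series uniformly in $k$. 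Finally, using that $\phi$ is bounded on $[r,\rho]$ and that $\vartheta^{k}\to 0$, I let $k\to\infty$ to kill the first term and conclude
\[
\phi(r) \le \frac{A(1-\tau)^{-\beta}(\rho-r)^{-\beta}}{1-\vartheta\tau^{-\beta}} + \frac{C}{1-\vartheta} \le c(\beta,\vartheta)\bigl[A(\rho-r)^{-\beta}+C\bigr],
\]
with $c(\beta,\vartheta):=\max\bigl\{(1-\tau)^{-\beta}(1-\vartheta\tau^{-\beta})^{-1},\,(1-\vartheta)^{-1}\bigr\}$, which indeed depends only on $\beta$ and $\vartheta$.

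There is no serious obstacle here. The only two points that need attention are the choice of the mesh ratio $\tau$ so that $\vartheta\tau^{-\beta}<1$ — this is exactly where the hypothesis $\vartheta<1$ is used — and the (essential) use of the boundedness of $\phi$ to guarantee $\vartheta^{k}\phi(t_k)\to 0$; without boundedness the term $\phi(t_k)$ could in principle grow faster than $\vartheta^{-k}$, and the passage to the limit would fail.
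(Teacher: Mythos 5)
Your proof is correct and is precisely the classical iteration (``hole-filling'') argument from the cited reference, Giusti \cite[Lemma 6.1]{Giu03}; the paper itself does not reprove this lemma but simply quotes it. The choice of the mesh ratio $\tau$ with $\vartheta\tau^{-\beta}<1$, the geometric sequence $t_i = r + (1-\tau^i)(\rho-r)$, the $k$-fold iteration, and the use of boundedness of $\phi$ to discard the remainder $\vartheta^k\phi(t_k)$ all match the standard proof exactly, and your handling of the degenerate case $\vartheta=0$ is also fine.
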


Now we prove Theorems~\ref{main thm1}, \ref{main thm3} and \ref{main thm4}.
\begin{proof}[Proof of Theorem~\ref{main thm1}]
For $r \in (0,2R]$, we define the upper-level set
\begin{equation*}
  E_l(r,\la):=\mgh{z\in Q_{r} : T_l(|Du|) > \la},
\end{equation*}
where $T_l$ is the truncation operator \eqref{T_k}. Note that $E_l(r,\la) = E(r,\la)$ for $l>\la$.
Then it follows from \eqref{est12} that
\begin{equation}\label{est12.5}
  \abs{E_l(R_1,H\lambda)} \le \gh{\frac{c\varepsilon}{H^{\theta}} + \frac{c\delta^{\theta\sigma_0}}{H^{\theta}} + \frac{c}{H^{p(1+\sigma)}}} \abs{E_l\gh{R_2,\lambda}} + \frac{c|\mu|(Q_{2R})}{\gh{\delta \lambda}^\gamma}
\end{equation}
whenever $l>H\lambda$, where $\sigma_0=\sigma_0(n,p,\theta,\kappa)>0$.
Multiplying \eqref{est12.5} by $(H\lambda)^{\gamma}$, we have
\begin{equation*}\label{est13}
\begin{aligned}
  (H\lambda)^{\gamma}\abs{E_l(R_1,H\lambda)} &\le \gh{\frac{c\varepsilon}{H^{\theta-\gamma}} + \frac{c\delta^{\theta\sigma_0}}{H^{\theta-\gamma}} + \frac{c}{H^{p(1+\sigma)-\gamma}}} \la^{\gamma} \abs{E_l\gh{R_2,\lambda}}\\
  &\qquad + \frac{cH^\gamma}{\delta^\gamma} |\mu|(Q_{2R})
\end{aligned}
\end{equation*}
for some constant $c=c(n,\La_0,\La_1,p,\theta,\kappa,C_0)\ge1$.
First choose $H$ sufficiently large such that
\begin{equation}\label{est13.5}
  \frac{c}{H^{p(1+\sigma)-\gamma}} \le \frac{1}{4} \quad \text{and} \quad H\ge4,
\end{equation}
and then choose $\varepsilon$ and $\delta$ sufficiently small such that
\begin{equation*}
  \frac{c\varepsilon}{H^{\theta-\gamma}} \le \frac{1}{4} \quad \text{and} \quad \frac{c\delta^{\theta\sigma_0}}{H^{\theta-\gamma}} \le \frac{1}{4}.
\end{equation*}
From \eqref{est13.5}, $\gamma$ have to be chosen such that $\gamma < p(1+\sigma)$. Thus, we choose the critical value $\kappa_c$ such that $\gamma = p(1+\sigma)$; that is,
$$
\frac{\kappa_c}{\kappa_c-1}\max\mgh{p-1, \ \frac{1}{2}\gh{p-\frac{n(2-p)}{\kappa_c}}} = p(1+\sigma).
$$
Taking the supremum with respect to $\la > B\la_0$, we obtain
\begin{equation*}\label{est14}
  \sup_{\la > HB\la_0} \la^{\gamma}\abs{E_l(R_1,\lambda)} \le \frac{3}{4} \sup_{\la > B\la_0} \la^{\gamma}\abs{E_l(R_2,\lambda)} + c |\mu|(Q_{2R}),
\end{equation*}
whenever $l>H\lambda$. Here $\la_0$ is given by \eqref{la0}.
Recalling \eqref{Marcinkiewicz space}, we see from \eqref{stop2} that
\begin{equation*}\label{est15}
\begin{aligned}
  \Abs{T_l(|Du|)}_{\M^\gamma(Q_{R_1},\bb^n)}^\gamma &\le \frac{3}{4} \Abs{T_l(|Du|)}_{\M^\gamma(Q_{R_2},\bb^n)}^\gamma + c |\mu|(Q_{2R}) + (HB\la_0)^{\gamma}|Q_{R_1}|\\
  &\le \frac{3}{4} \Abs{T_l(|Du|)}_{\M^\gamma(Q_{R_2},\bb^n)}^\gamma + c |\mu|(Q_{2R})+ c\la_0^{\gamma}\gh{\frac{R}{R_2-R_1}}^{\frac{\gamma dN}{\theta}} R^N
\end{aligned}
\end{equation*}
for all $R\le R_1 < R_2 \le 2R$.
Applying Lemma~\ref{L:Aux01}  and letting $l\to\infty$, we discover
\begin{equation*}\label{est16}
\begin{aligned}
  \Abs{Du}_{\M^{\gamma}(Q_{R},\bb^n)}^\gamma &\le c |\mu|(Q_{2R}) + c \la_0^{\gamma}R^N\\
  &\le c R^N \frac{|\mu|(Q_{2R})}{|Q_{2R}|} + c R^N + c R^N \gh{\mint{Q_{2R}}{|Du|^{\theta}}{dxdt}}^{\frac{d\gamma}{\theta}}\\
  &\qquad + c R^N \bgh{\frac{|\mu|(Q_{2R})}{|Q_{2R}|}}^{d},
\end{aligned}
\end{equation*}
which completes the proof.
\end{proof}

\begin{proof}[Proof of Theorem~\ref{main thm3}]
In view of Lemma~\ref{compa3} and Remark~\ref{comparison1-remark}, we replace $\gamma$ by $\gamma_2$ in Sections~\ref{Stopping time argument}--\ref{Estimates on upper-level sets}.
Proceeding as in Sections~\ref{Stopping time argument}--\ref{Estimates on upper-level sets} and Proof of Theorem~\ref{main thm1} above, we can obtain Theorem~\ref{main thm3}.
\end{proof}

\begin{proof}[Proof of Theorem~\ref{main thm4}]
We proceed as in Section~\ref{la covering arguments}. We only mention the parts that change in Section~\ref{la covering arguments}.
We choose $H\ge \max\{4,3c_0\}$, where the constant $c_0$ is given by Proposition~\ref{comparison2}. Instead of \eqref{est3}, we split into
\begin{equation*}\label{w:est3}
\begin{aligned}
  \abs{Q_i^2 \cap E(R_2,H\la)} &= \abs{\mgh{z\in Q_i^2 : |Du|>H\la}}\\
  &\le \abs{\mgh{z\in Q_i^2 : |Du-Du_{k}|>\frac{H\la}{3}}}\\
  &\qquad + \abs{\mgh{z\in Q_i^2 : |Du_{k}-Dv_{k,i}|>\frac{H\la}{3}}}\\
  &\qquad + \abs{\mgh{z\in Q_i^2 : |Dv_{k,i}|>\frac{H\la}{3}}}\\
  &=: J_1 + J_2 + J_3,
\end{aligned}
\end{equation*}
where $v_{k,i}$ is the weak solution of the Cauchy-Dirichlet problem
\begin{equation*}
\label{E:v_i}\left\{
\begin{alignedat}{3}
\partial_{t}v_{k,i} - \ddiv\bar{\ma}_{B_{40r_{z_i}}^{\lambda}}(Dv_{k,i},t) &= 0 &&\quad \text{in} \ Q_i^3, \\
v_{k,i} &= w_{k,i} &&\quad \text{on} \ \partial_p Q_i^3.
\end{alignedat}\right.
\end{equation*}
Here $w_{k,i}$ is the weak solution of \eqref{E:w_i}. Then we see from Proposition~\ref{comparison2} and the choice of $H$ that $J_3=0$. Also, we can estimate $J_1$ and $J_2$ similar to the estimates of $I_1$ and $I_2$ in Section~\ref{Estimates on upper-level sets}. Performing the rest of Section~\ref{Estimates on upper-level sets}, we obtain, instead of \eqref{est12}, the following decay estimate
\begin{equation}\label{w:est12}
  \abs{E(R_1,H\lambda)} \le \gh{\frac{c\varepsilon}{H^{\theta}} + \frac{c\delta^{\sigma_2}}{H^{\theta}}} \abs{E\gh{R_2,\lambda}} + \frac{c|\mu|(Q_{2R})}{\gh{\delta \lambda}^\gamma},
\end{equation}
where $\sigma_2=\sigma_2(n,\La_0,\La_1,p,\theta,\kappa)>0$.
Proceeding as in Proof of Theorem~\ref{main thm1} above with \eqref{w:est12} replacing \eqref{est12.5}, we deduce Theorem~\ref{main thm4}. We remark that we obtain Theorem~\ref{main thm4} without \eqref{est13.5}; that is, $\gamma<\infty$.
\end{proof}


\section*{Acknowledgments}
The author thanks Pilsoo Shin for helpful comments on an earlier draft of this paper. The author also thanks the anonymous referees for the valuable comments, which improved the exposition and the accuracy of the paper.
This paper was supported by the National Research Foundation of Korea grant (No. NRF-2019R1C1C1003844) from the Korea government and by Education and Research promotion program of KOREATECH in 2022.



\begin{bibdiv}
\begin{biblist}

\bib{AM07}{article}{
      author={Acerbi, E.},
      author={Mingione, G.},
       title={Gradient estimates for a class of parabolic systems},
        date={2007},
        ISSN={0012-7094},
     journal={Duke Math. J.},
      volume={136},
      number={2},
       pages={285\ndash 320},
         url={http://dx.doi.org/10.1215/S0012-7094-07-13623-8},
      review={\MR{2286632}},
}

\bib{AMST99}{article}{
      author={Andreu, F.},
      author={Maz\'{o}n, J.~M.},
      author={Segura~de Le\'{o}n, S.},
      author={Toledo, J.},
       title={Existence and uniqueness for a degenerate parabolic equation with
  {$L^1$}-data},
        date={1999},
        ISSN={0002-9947},
     journal={Trans. Amer. Math. Soc.},
      volume={351},
      number={1},
       pages={285\ndash 306},
         url={https://doi.org/10.1090/S0002-9947-99-01981-9},
      review={\MR{1433108}},
}

\bib{AKP15}{article}{
      author={Avelin, B.},
      author={Kuusi, T.},
      author={Parviainen, M.},
       title={Variational parabolic capacity},
        date={2015},
        ISSN={1078-0947},
     journal={Discrete Contin. Dyn. Syst.},
      volume={35},
      number={12},
       pages={5665\ndash 5688},
         url={https://doi.org/10.3934/dcds.2015.35.5665},
      review={\MR{3393250}},
}

\bib{Bar14}{article}{
      author={Baroni, P.},
       title={Marcinkiewicz estimates for degenerate parabolic equations with
  measure data},
        date={2014},
        ISSN={0022-1236},
     journal={J. Funct. Anal.},
      volume={267},
      number={9},
       pages={3397\ndash 3426},
         url={https://doi.org/10.1016/j.jfa.2014.08.017},
      review={\MR{3261114}},
}

\bib{Bar14b}{article}{
      author={Baroni, P.},
       title={Nonlinear parabolic equations with {M}orrey data},
        date={2014},
        ISSN={0035-6298},
     journal={Riv. Math. Univ. Parma (N.S.)},
      volume={5},
      number={1},
       pages={65\ndash 92},
      review={\MR{3289597}},
}

\bib{Bar17}{article}{
      author={Baroni, P.},
       title={Singular parabolic equations, measures satisfying density
  conditions, and gradient integrability},
        date={2017},
        ISSN={0362-546X},
     journal={Nonlinear Anal.},
      volume={153},
       pages={89\ndash 116},
         url={https://doi.org/10.1016/j.na.2016.10.019},
      review={\MR{3614663}},
}

\bib{BH12}{article}{
      author={Baroni, P.},
      author={Habermann, J.},
       title={Calder\'{o}n-{Z}ygmund estimates for parabolic measure data
  equations},
        date={2012},
        ISSN={0022-0396},
     journal={J. Differential Equations},
      volume={252},
      number={1},
       pages={412\ndash 447},
         url={https://doi.org/10.1016/j.jde.2011.08.016},
      review={\MR{2852212}},
}

\bib{BDGO97}{article}{
      author={Boccardo, L.},
      author={Dall'Aglio, A.},
      author={Gallou\"et, T.},
      author={Orsina, L.},
       title={Nonlinear parabolic equations with measure data},
        date={1997},
        ISSN={0022-1236},
     journal={J. Funct. Anal.},
      volume={147},
      number={1},
       pages={237\ndash 258},
         url={http://dx.doi.org/10.1006/jfan.1996.3040},
      review={\MR{1453181}},
}

\bib{BG89}{article}{
      author={Boccardo, L.},
      author={Gallou\"et, T.},
       title={Nonlinear elliptic and parabolic equations involving measure
  data},
        date={1989},
        ISSN={0022-1236},
     journal={J. Funct. Anal.},
      volume={87},
      number={1},
       pages={149\ndash 169},
         url={http://dx.doi.org/10.1016/0022-1236(89)90005-0},
      review={\MR{1025884}},
}

\bib{Bog07}{book}{
      author={B\"ogelein, V.},
       title={Regularity results for weak and very weak solutions of higher
  order parabolic systems},
   publisher={Ph.D. Thesis},
        date={2007},
}

\bib{BD18}{article}{
      author={Bui, T.~A.},
      author={Duong, X.~T.},
       title={Global {M}arcinkiewicz estimates for nonlinear parabolic
  equations with nonsmooth coefficients},
        date={2018},
        ISSN={0391-173X},
     journal={Ann. Sc. Norm. Super. Pisa Cl. Sci. (5)},
      volume={18},
      number={3},
       pages={881\ndash 916},
      review={\MR{3807590}},
}

\bib{BCS21}{article}{
      author={Byun, S.-S.},
      author={Cho, N.},
      author={Song, K.},
       title={Optimal fractional differentiability for nonlinear parabolic
  measure data problems},
        date={2021},
        ISSN={0893-9659},
     journal={Appl. Math. Lett.},
      volume={112},
       pages={106816, 10 pp},
         url={https://doi.org/10.1016/j.aml.2020.106816},
      review={\MR{4162964}},
}

\bib{BOR13}{article}{
      author={Byun, S.-S.},
      author={Ok, J.},
      author={Ryu, S.},
       title={Global gradient estimates for general nonlinear parabolic
  equations in nonsmooth domains},
        date={2013},
        ISSN={0022-0396},
     journal={J. Differential Equations},
      volume={254},
      number={11},
       pages={4290\ndash 4326},
         url={http://dx.doi.org/10.1016/j.jde.2013.03.004},
      review={\MR{3035434}},
}

\bib{BP18b}{article}{
      author={Byun, S.-S.},
      author={Park, J.-T.},
       title={Global weighted {O}rlicz estimates for parabolic measure data
  problems: application to estimates in variable exponent spaces},
        date={2018},
        ISSN={0022-247X},
     journal={J. Math. Anal. Appl.},
      volume={467},
      number={2},
       pages={1194\ndash 1207},
         url={https://doi.org/10.1016/j.jmaa.2018.07.059},
      review={\MR{3842429}},
}

\bib{BPS21}{article}{
      author={Byun, S.-S.},
      author={Park, J.-T.},
      author={Shin, P.},
       title={Global regularity for degenerate/singular parabolic equations
  involving measure data},
        date={2021},
        ISSN={0944-2669},
     journal={Calc. Var. Partial Differential Equations},
      volume={60},
      number={1},
       pages={Paper No. 18, 32 pp},
         url={https://doi.org/10.1007/s00526-020-01906-2},
      review={\MR{4201641}},
}

\bib{DiB93}{book}{
      author={DiBenedetto, E.},
       title={Degenerate parabolic equations},
      series={Universitext},
   publisher={Springer-Verlag, New York},
        date={1993},
        ISBN={0-387-94020-0},
         url={https://doi.org/10.1007/978-1-4612-0895-2},
      review={\MR{1230384}},
}

\bib{DF85}{article}{
      author={DiBenedetto, E.},
      author={Friedman, A.},
       title={H\"{o}lder estimates for nonlinear degenerate parabolic systems},
        date={1985},
        ISSN={0075-4102},
     journal={J. Reine Angew. Math.},
      volume={357},
       pages={1\ndash 22},
         url={https://doi.org/10.1515/crll.1985.357.1},
      review={\MR{783531}},
}

\bib{DF85b}{article}{
      author={DiBenedetto, E.},
      author={Friedman, A.},
       title={Addendum to: ``{H}\"{o}lder estimates for nonlinear degenerate
  parabolic systems''},
        date={1985},
        ISSN={0075-4102},
     journal={J. Reine Angew. Math.},
      volume={363},
       pages={217\ndash 220},
         url={https://doi.org/10.1515/crll.1985.363.217},
      review={\MR{814022}},
}

\bib{DZ21}{misc}{
      author={Dong, H.},
      author={Zhu, H.},
       title={Gradient estimates for singular $p$-laplace type equations with
  measure data},
        date={2021},
        note={\href{https://arxiv.org/abs/2102.08584}{arXiv:2102.08584}},
}

\bib{DPP03}{article}{
      author={Droniou, J.},
      author={Porretta, A.},
      author={Prignet, A.},
       title={Parabolic capacity and soft measures for nonlinear equations},
        date={2003},
        ISSN={0926-2601},
     journal={Potential Anal.},
      volume={19},
      number={2},
       pages={99\ndash 161},
         url={https://doi.org/10.1023/A:1023248531928},
      review={\MR{1976292}},
}

\bib{DM11}{article}{
      author={Duzaar, F.},
      author={Mingione, G.},
       title={Gradient estimates via non-linear potentials},
        date={2011},
        ISSN={0002-9327},
     journal={Amer. J. Math.},
      volume={133},
      number={4},
       pages={1093\ndash 1149},
         url={http://dx.doi.org/10.1353/ajm.2011.0023},
      review={\MR{2823872}},
}

\bib{EG15}{book}{
      author={Evans, L.~C.},
      author={Gariepy, R.~F.},
       title={Measure theory and fine properties of functions},
     edition={Revised},
      series={Textbooks in Mathematics},
   publisher={CRC Press, Boca Raton, FL},
        date={2015},
        ISBN={978-1-4822-4238-6},
      review={\MR{3409135}},
}

\bib{FST91}{article}{
      author={Fukushima, M.},
      author={Sato, K.-i.},
      author={Taniguchi, S.},
       title={On the closable parts of pre-{D}irichlet forms and the fine
  supports of underlying measures},
        date={1991},
        ISSN={0030-6126},
     journal={Osaka J. Math.},
      volume={28},
      number={3},
       pages={517\ndash 535},
         url={http://projecteuclid.org/euclid.ojm/1200783223},
      review={\MR{1144471}},
}

\bib{Giu03}{book}{
      author={Giusti, E.},
       title={Direct methods in the calculus of variations},
   publisher={World Scientific Publishing Co., Inc., River Edge, NJ},
        date={2003},
        ISBN={981-238-043-4},
         url={http://dx.doi.org/10.1142/9789812795557},
      review={\MR{1962933}},
}

\bib{Gra14}{book}{
      author={Grafakos, L.},
       title={Classical {F}ourier analysis},
     edition={Third},
      series={Graduate Texts in Mathematics},
   publisher={Springer, New York},
        date={2014},
      volume={249},
        ISBN={978-1-4939-1193-6; 978-1-4939-1194-3},
         url={http://dx.doi.org/10.1007/978-1-4939-1194-3},
      review={\MR{3243734}},
}

\bib{KKKP13}{article}{
      author={Kinnunen, J.},
      author={Korte, R.},
      author={Kuusi, T.},
      author={Parviainen, M.},
       title={Nonlinear parabolic capacity and polar sets of superparabolic
  functions},
        date={2013},
        ISSN={0025-5831},
     journal={Math. Ann.},
      volume={355},
      number={4},
       pages={1349\ndash 1381},
         url={https://doi.org/10.1007/s00208-012-0825-x},
      review={\MR{3037018}},
}

\bib{KL00}{article}{
      author={Kinnunen, J.},
      author={Lewis, J.~L.},
       title={Higher integrability for parabolic systems of {$p$}-{L}aplacian
  type},
        date={2000},
        ISSN={0012-7094},
     journal={Duke Math. J.},
      volume={102},
      number={2},
       pages={253\ndash 271},
         url={http://dx.doi.org/10.1215/S0012-7094-00-10223-2},
      review={\MR{1749438}},
}

\bib{KR19}{article}{
      author={Klimsiak, T.},
      author={Rozkosz, A.},
       title={On the structure of diffuse measures for parabolic capacities},
        date={2019},
        ISSN={1631-073X},
     journal={C. R. Math. Acad. Sci. Paris},
      volume={357},
      number={5},
       pages={443\ndash 449},
         url={https://doi.org/10.1016/j.crma.2019.04.012},
      review={\MR{3959840}},
}

\bib{KM13b}{article}{
      author={Kuusi, T.},
      author={Mingione, G.},
       title={Gradient regularity for nonlinear parabolic equations},
        date={2013},
        ISSN={0391-173X},
     journal={Ann. Sc. Norm. Super. Pisa Cl. Sci. (5)},
      volume={12},
      number={4},
       pages={755\ndash 822},
      review={\MR{3184569}},
}

\bib{KM14c}{article}{
      author={Kuusi, T.},
      author={Mingione, G.},
       title={Riesz potentials and nonlinear parabolic equations},
        date={2014},
        ISSN={0003-9527},
     journal={Arch. Ration. Mech. Anal.},
      volume={212},
      number={3},
       pages={727\ndash 780},
         url={https://doi.org/10.1007/s00205-013-0695-8},
      review={\MR{3187676}},
}

\bib{KM14b}{article}{
      author={Kuusi, T.},
      author={Mingione, G.},
       title={The {W}olff gradient bound for degenerate parabolic equations},
        date={2014},
        ISSN={1435-9855},
     journal={J. Eur. Math. Soc. (JEMS)},
      volume={16},
      number={4},
       pages={835\ndash 892},
         url={http://dx.doi.org/10.4171/JEMS/449},
      review={\MR{3191979}},
}

\bib{Min07}{article}{
      author={Mingione, G.},
       title={The {C}alder\'on-{Z}ygmund theory for elliptic problems with
  measure data},
        date={2007},
        ISSN={0391-173X},
     journal={Ann. Sc. Norm. Super. Pisa Cl. Sci. (5)},
      volume={6},
      number={2},
       pages={195\ndash 261},
      review={\MR{2352517}},
}

\bib{Min10}{article}{
      author={Mingione, G.},
       title={Gradient estimates below the duality exponent},
        date={2010},
        ISSN={0025-5831},
     journal={Math. Ann.},
      volume={346},
      number={3},
       pages={571\ndash 627},
         url={http://dx.doi.org/10.1007/s00208-009-0411-z},
      review={\MR{2578563}},
}

\bib{Min11b}{article}{
      author={Mingione, G.},
       title={Nonlinear measure data problems},
        date={2011},
        ISSN={1424-9286},
     journal={Milan J. Math.},
      volume={79},
      number={2},
       pages={429\ndash 496},
         url={http://dx.doi.org/10.1007/s00032-011-0168-1},
      review={\MR{2862024}},
}

\bib{Ngu15}{article}{
      author={Nguyen, Q.-H.},
       title={Global estimates for quasilinear parabolic equations on
  {R}eifenberg flat domains and its applications to {R}iccati type parabolic
  equations with distributional data},
        date={2015},
        ISSN={0944-2669},
     journal={Calc. Var. Partial Differential Equations},
      volume={54},
      number={4},
       pages={3927\ndash 3948},
         url={http://dx.doi.org/10.1007/s00526-015-0926-y},
      review={\MR{3426099}},
}

\bib{Ngu_pre}{article}{
      author={Nguyen, Q.-H.},
       title={Potential estimates and quasilinear parabolic equations with
  measure data},
     journal={Mem. Amer. Math. Soc.},
       pages={to appear},
}

\bib{NP19}{article}{
      author={Nguyen, Q.-H.},
      author={Phuc, N.~C.},
       title={Good-{$\lambda$} and {M}uckenhoupt-{W}heeden type bounds in
  quasilinear measure datum problems, with applications},
        date={2019},
        ISSN={0025-5831},
     journal={Math. Ann.},
      volume={374},
      number={1-2},
       pages={67\ndash 98},
         url={https://doi.org/10.1007/s00208-018-1744-2},
      review={\MR{3961305}},
}

\bib{NP20}{article}{
      author={Nguyen, Q.-H.},
      author={Phuc, N.~C.},
       title={Pointwise gradient estimates for a class of singular quasilinear
  equations with measure data},
        date={2020},
        ISSN={0022-1236},
     journal={J. Funct. Anal.},
      volume={278},
      number={5},
       pages={108391, 35 pp},
         url={https://doi.org/10.1016/j.jfa.2019.108391},
      review={\MR{4046205}},
}

\bib{NP20b}{article}{
      author={Nguyen, Q.-H.},
      author={Phuc, N.~C.},
       title={Existence and regularity estimates for quasilinear equations with
  measure data: the case $1<p\leq \frac{3n-2}{2n-1}$},
    journal={Anal. PDE},
      pages={to appear},
}

\bib{PS22}{article}{
      author={Park, J.-T.},
      author={Shin, P.},
       title={Regularity estimates for singular parabolic measure data problems
  with sharp growth},
        date={2022},
        ISSN={0022-0396},
     journal={J. Differential Equations},
      volume={316},
       pages={726\ndash 761},
         url={https://doi.org/10.1016/j.jde.2022.01.037},
      review={\MR{4379305}},
}

\bib{Pet08}{article}{
      author={Petitta, F.},
       title={Renormalized solutions of nonlinear parabolic equations with
  general measure data},
        date={2008},
        ISSN={0373-3114},
     journal={Ann. Mat. Pura Appl. (4)},
      volume={187},
      number={4},
       pages={563\ndash 604},
         url={http://dx.doi.org/10.1007/s10231-007-0057-y},
      review={\MR{2413369}},
}

\bib{PPP11}{article}{
      author={Petitta, F.},
      author={Ponce, A.~C.},
      author={Porretta, A.},
       title={Diffuse measures and nonlinear parabolic equations},
        date={2011},
        ISSN={1424-3199},
     journal={J. Evol. Equ.},
      volume={11},
      number={4},
       pages={861\ndash 905},
         url={https://doi.org/10.1007/s00028-011-0115-1},
      review={\MR{2861310}},
}

\bib{PP15}{article}{
      author={Petitta, F.},
      author={Porretta, A.},
       title={On the notion of renormalized solution to nonlinear parabolic
  equations with general measure data},
        date={2015},
        ISSN={2296-9020},
     journal={J. Elliptic Parabol. Equ.},
      volume={1},
       pages={201\ndash 214},
         url={https://doi.org/10.1007/BF03377376},
      review={\MR{3403419}},
}

\bib{Pie83}{article}{
      author={Pierre, M.},
       title={Parabolic capacity and {S}obolev spaces},
        date={1983},
        ISSN={0036-1410},
     journal={SIAM J. Math. Anal.},
      volume={14},
      number={3},
       pages={522\ndash 533},
         url={https://doi.org/10.1137/0514044},
      review={\MR{697527}},
}

\bib{Pri97}{article}{
      author={Prignet, A.},
       title={Existence and uniqueness of ``entropy'' solutions of parabolic
  problems with {$L^1$} data},
        date={1997},
        ISSN={0362-546X},
     journal={Nonlinear Anal.},
      volume={28},
      number={12},
       pages={1943\ndash 1954},
         url={https://doi.org/10.1016/S0362-546X(96)00030-2},
      review={\MR{1436364}},
}

\bib{Urb08}{book}{
      author={Urbano, J.~M.},
       title={The method of intrinsic scaling},
      series={Lecture Notes in Mathematics},
   publisher={Springer-Verlag, Berlin},
        date={2008},
      volume={1930},
        ISBN={978-3-540-75931-7},
         url={https://doi.org/10.1007/978-3-540-75932-4},
        note={A systematic approach to regularity for degenerate and singular
  PDEs},
      review={\MR{2451216}},
}

\bib{Vaz06}{book}{
      author={V\'{a}zquez, J.~L.},
       title={Smoothing and decay estimates for nonlinear diffusion equations},
      series={Oxford Lecture Series in Mathematics and its Applications},
   publisher={Oxford University Press, Oxford},
        date={2006},
      volume={33},
        ISBN={978-0-19-920297-3; 0-19-920297-4},
         url={https://doi.org/10.1093/acprof:oso/9780199202973.001.0001},
        note={Equations of porous medium type},
      review={\MR{2282669}},
}

\end{biblist}
\end{bibdiv}

\end{document}